\def\cA{\mathcal{A}}
\def\cB{\mathcal{B}}
\def\cG{\mathcal{G}}
\def\cI{\mathcal{I}}
\def\cS{\mathcal{S}}
\def\cT{\mathcal{T}}
\def\cU{\mathcal{U}}
\def\cN{\mathcal{N}}
\def\cP{\mathcal{P}}
\def\cQ{\mathcal{Q}}
\def\cR{\mathcal{R}}
\def\cW{\mathcal{W}}
\def\push{\mathrm{push}}
\def\can{\mathrm{can}}
\def\Tam{\mathrm{Tam}}
\def\nblue{n_{\mathrm{blue}}}
\def\nblack{n_{\mathrm{black}}}
\def\nred{n_{\mathrm{red}}}
\def\Sep{\mathrm{Sep}}
\def\Tblue{T_{\mathrm{blue}}}
\def\low{\mathrm{low}}
\def\mid{\mathrm{mid}}
\def\upp{\mathrm{up}}
\def\gammal{\gamma_\low}
\def\gammam{\gamma_\mid}
\def\gammau{\gamma_\upp}
\newtheorem{theo}{Theorem}
\newtheorem{prop}{Proposition}
\newtheorem{corollary}{Corollary}
\newtheorem{lemma}{Lemma}
\theoremstyle{remark}
\newtheorem{remark}{Remark}
\begin{document}


\title[Bijections for generalized Tamari intervals via orientations]{Bijections for generalized Tamari intervals via orientations}


\author{\'Eric Fusy}
\address{\'EF: Laboratoire Informatique Gaspard Monge, 
Universit\'e Gustave Eiffel, CNRS, LIGM, F-77454 Marne-la-Vallée, France}
\curraddr{}
\email{eric.fusy@u-pem.fr}
\urladdr{}


\author{Abel Humbert}
\address{AH: IRIF, Universit\'e Paris-Diderot, France}
\email{ahumbert@irif.fr}







\begin{abstract}
Generalized Tamari intervals have been recently introduced by Pr\'eville-Ratelle and Viennot, and have been proved to be in bijection with (rooted planar) non-separable maps by Fang and Pr\'eville-Ratelle. We present two new bijections between generalized Tamari intervals and non-separable maps.  Our first construction 
proceeds via separating decompositions on simple bipartite quadrangulations (which are known to be in bijection with non-separable maps). It can be seen as an extension of the Bernardi-Bonichon bijection between Tamari intervals and minimal Schnyder woods.  
On the other hand, our second construction relies on a specialization of the Bernardi-Bonichon bijection to so-called 
synchronized Tamari intervals, which are known to be in one-to-one correspondence with generalized Tamari intervals. 
It yields a trivariate generating function expression that interpolates between the bivariate generating function for generalized Tamari intervals, 
and the univariate generating function for Tamari intervals. 
\end{abstract}

\maketitle



\section{Introduction}
The $\nu$-Tamari lattice $\Tam(\nu)$ (for $\nu$ an arbitrary directed walk with steps in $\{N,E\}$) has been recently 
introduced by Pr\'eville-Ratelle and Viennot~\cite{PV17}, and further studied in~\cite{ceballos2019geometry,ceballos2018nu}, with connections to geometric combinatorics.  
It is a lattice on the set of directed walks weakly above $\nu$ and with same endpoints as $\nu$, and it 
generalizes the Tamari lattice~\cite{tamari1951monoi} (in size $n$, case where $\nu=(NE)^n$) 
and the $m$-Tamari lattices~\cite{bergeron2012higher} 
(in size $n$, case where $\nu=(NE^m)^n$).   
 
The enumeration of intervals (i.e., pairs formed by two elements $x,x'$ with $x\leq x'$) in Tamari lattices has attracted a lot of attention~\cite{bousquet2013representation,BFP12,Ch06}, due in particular to their (conjectural) connections to dimensions of diagonal coinvariant spaces~\cite{bergeron2012higher}, and to their bijective connections to planar maps~\cite{BB09}, as well as intriguing symmetry properties~\cite{Ch18,pons2018rise}. A \emph{planar map} (shortly, a map) is a connected multigraph embedded in the plane, up to continuous deformation. A \emph{rooted map} is a map with a marked corner incident to the outer face (all maps in this article are assumed to be rooted if not specified otherwise). A map is called \emph{non-separable} (or 2-connected) if it is either the loop-map, or is loopless and $M\backslash v$ is connected for every vertex $v\in M$. Chapoton~\cite{Ch06} proved that the number of Tamari intervals of size $n$ is $\frac{2}{n(n+1)}\binom{4n+1}{n-1}$, which coincides with the number of simple triangulations with $n+3$ vertices~\cite{Tu62}. Bernardi and Bonichon~\cite{BB09} subsequently gave a bijective proof of this formula, relying on so-called \emph{Schnyder woods} (orientations and colorations of the inner edges in $3$ colors, with specific local constraints).     
Regarding $\nu$-Tamari lattices, if we let $\cI_{\nu}$ be the set of intervals in $\Tam(\nu)$, then it has recently been shown by Fang and Pr\'eville-Ratelle~\cite{FP17} that 
$\cG_n:=\cup_{\nu\in\{N,E\}^n}\cI_{\nu}$ (generalized Tamari intervals of size $n$) is in bijection with the set $\cN_n$ of non-separable maps with $n+2$ edges, and more precisely that $\cG_{i,j}:=\sum_{\nu\in\mathfrak{S}(E^iN^j)}\cI_{\nu}$ is in bijection with the set $\cN_{i,j}$ of non-separable maps with $i+2$ vertices and $j+2$ faces (it is known~\cite{BT64,Tu63} that $|\cN_{n}|=\frac{2(3n+3)!}{(n+2)!(2n+3)!}$ and $|\cN_{i,j}|=\frac{(2i+j+1)!(2j+i+1)!}{(i+1)!(j+1)!(2i+1)!(2j+1)!}$). 
They have a first recursive bijection based on parallel decompositions with a catalytic variable, and then make the bijection more explicit via certain auxiliary labeled trees. 
As shown in~\cite{fang2018trinity}, their bijection also has interesting symmetry properties, as it commutes with natural involutions on the two classes (duality on maps, and a mirror duality for Tamari intervals).    

A \emph{quadrangulation} is a map with all faces of degree $4$; by a \emph{bipartite quadrangulation} we mean a quadrangulation endowed with its unique coloration of 
vertices in black or white such that adjacent vertices have different colors,   
and the root-vertex (vertex at the root-corner) is black. By a classical correspondence~\cite[Section 7]{B65}, $\cN_{i,j}$
 is in bijection with the set $\cQ_{i,j}$ of bipartite simple quadrangulations with $i+2$
black vertices and $j+2$ white vertices.

In this article, we give two new bijections between $\cG_{i,j}$ and $\cQ_{i,j}$. Each one relies on seeing $\cG_{i,j}$ as included in a certain superfamily, and specializing a bijection
involving oriented maps. In our first bijection (Section~\ref{sec:bij_sep}) 
we see $\cG_{i,j}$ as a subfamily of non-intersecting
triples of lattice walks (a so-called Baxter family) and specialize a bijection (closely related to the one in~\cite{FPS09}
and also to a recent bijection by Kenyon et al.~\cite{KMSW}) 
with so-called separating decompositions on simple quadrangulations. We also show that this construction gives an extension of the Bernardi-Bonichon bijection (which is recovered as the case $\nu=(NE)^n$).   
In our second bijection (Section~\ref{sec:bij_sch}) we see $\cG_{i,j}$ as a subfamily (synchronized intervals) 
of classical Tamari intervals of size $i+j+1$, to which we specialize the Bernardi-Bonichon bijection~\cite{BB09}, which we compose with a bijection~\cite{BF12} to certain tree-structures  on which we can characterize the property of being synchronized. 

Several parameters can be tracked by the first construction, which 
 gives a model of maps for intervals in the $m$-Tamari lattices, and reveals certain symmetry properties on $\cG_{i,j}$. The second construction yields a trivariate generating function expression (Corollary~\ref{coro:tri_series}) 
that interpolates between the bivariate generating function of generalized Tamari intervals and the univariate 
generating function of classical Tamari intervals. 

\section{The $\nu$-Tamari lattice, and generalized Tamari intervals}
We recall from~\cite{PV17} the definition of $\nu$-Tamari lattices, and how they are related to the classical Tamari lattice. 
We consider walks in $\mathbb{N}^2$ starting at the origin and having steps North or East 
(these are equivalent to words on the alphabet $\{N,E\}$). For two such walks $\gamma,\gamma'$, we say that $\gamma'$ 
is \emph{weakly above} $\gamma$ if $\gamma$ and $\gamma'$ have the same endpoint, and no East step of $\gamma$ is strictly
above the East step of $\gamma'$ in the same vertical column. A Dyck walk of length $2n$ thus corresponds to a walk $\gamma$ that is weakly above $(NE)^n$. More generally, for $\nu$ a walk, we let $\cW_{\nu}$ be the set of walks weakly above $\nu$. 
For $\gamma\in\cW_{\nu}$ and for $p=(x,y)$ a point on $\gamma$, we let $x'\geq x$ be the abscissa of the North step of $\nu$ from ordinate $y$ to $y+1$ (with the convention that $x'=i$ if $y=j$), and we let $\ell(p):=x'-x$. If $p$ is preceded by $E$ and followed by $N$ we let $p'$ be the next point after $p$ along $\gamma$ such that $\ell(p')=\ell(p)$, and we let $\push_p(\gamma)$ be the walk $\gamma'$ obtained from $\gamma$ by moving the $E$ preceding $p$ to be just after $p'$ (see Figure~\ref{fig:flip} for an example); we say that $\gamma'$ \emph{covers} $\gamma$. The \emph{Tamari lattice for $\nu$} is defined as  $\Tam(\nu)=(\cW_{\nu},\leq)$ where $\leq$ is the 
transitive closure of the covering relation.  
The classical Tamari lattice $\Tam_n$ corresponds to the special case $\Tam_n=\Tam((NE)^n)$, and more generally for $m\geq 1$, the $m$-Tamari lattice $\Tam_n^{(m)}$ corresponds 
to the special case $\Tam_n^{(m)}=\Tam((NE^m)^n)$ . 

\begin{figure}[h!]
\begin{center}
\includegraphics[width=10cm]{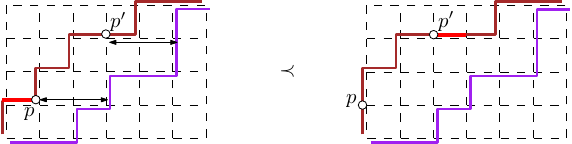}
\end{center}
\caption{A covering relation in $\Tam(\nu)$ for $\nu=EENENEENNE$.}
\label{fig:flip}
\end{figure}

Interestingly, for $\nu$ of length $n$, $\Tam(\nu)$ can also be obtained
 as a sublattice of $\Tam_{n+1}$, the classical Tamari lattice on Dyck walks of length $2n+2$. 
For $\gamma=E^{\alpha_0}NE^{\alpha_1}\cdots NE^{\alpha_k}$ a Dyck walk of length $2k$, 
the \emph{canopy-word} of $\gamma$ is the word $\can(\gamma)=(w_0,\ldots,w_{k})\in\{E,N\}^{k+1}$ such that for 
$r\in\llbracket 0,k\rrbracket$, $w_r=E$ if $\alpha_r=0$ and $w_r=N$ if $\alpha_r\geq 1$  
 (note that we always have $w_0=E$ and $w_k=N$). 
 Then $\Tam(\nu)$ is isomorphic to the sublattice of $\Tam_{n+1}$ induced by the Dyck walks
whose canopy-word is equal to $E\nu\ \!\!N$.


Let $\cG_{i,j}$ (resp. $\cG_n$) be the set of triples $(\nu,\gamma,\gamma')$ such 
that $\gamma\leq\gamma'$ in $\Tam(\nu)$, and $\nu$ ends at $(i,j)$ (resp. $\nu$ has length $n$).  
Elements of $\cG_{i,j}$ (resp. $\cG_n$) are called \emph{generalized Tamari intervals}
 with endpoint $(i,j)$ (resp. of size $n$).  
We now make two remarks based on properties shown in~\cite{PV17} (each remark is associated with a bijection for $\cG_{i,j}$ described later, respectively in Section~\ref{sec:bij_sep} and Section~\ref{sec:bij_sch}, the second remark is also used for the bijection in~\cite{FP17}). 

\begin{remark}
 Since $\gamma\leq \gamma'$ in $\Tam(\nu)$ implies that $\gamma'$ is weakly above $\gamma$, $\cG_{i,j}$
is a subfamily of the family $\cR_{i,j}$ of triples of walks $(\nu,\gamma,\gamma')$, each starting at the origin and ending at $(i,j)$, such that
 $\gamma'$ is weakly above $\gamma$, itself weakly above~$\nu$. 
\end{remark}

\begin{remark}
On the other hand, let $\cI_n$ be the set of intervals in $\Tam_n$ (classical Tamari intervals, on Dyck words of length $2n$). 
An interval $(\gamma,\gamma')\in\cI_n$ is called \emph{synchronized} if $\can(\gamma)=\can(\gamma')$. Let $\cS_{n}\subset\cI_n$ 
be the set of synchronized Tamari intervals of size $n$. 
Then the above sublattice characterization of $\Tam(\nu)$ implies that 
$\cG_n$ is in bijection with $\cS_{n+1}$. More generally, if we let $\cS_{i,j}$ be the set of synchronized intervals such that the common canopy-word is in $\frak{S}(E^{i+1}N^{j+1})$, then $\cG_{i,j}$ is in bijection with $\cS_{i,j}$.  
\end{remark}

\section{Bijection using separating decompositions}\label{sec:bij_sep}
Several bijections are known between $\cR_{i,j}$ and other combinatorial families (which are called \emph{Baxter families}, a survey is given in~\cite{FFNO11}). 
Our aim here is to pick one such bijection and show that it specializes 
nicely to the subfamily $\cG_{i,j}\subset\cR_{i,j}$. We pick the bijection (called here $\Phi$) from~\cite{FPS09} for separating decompositions,  but have to slightly 
modify it (the modified bijection is called $\Phi'$) so that it specializes well. As we will see in Section~\ref{sec:bijKMSW}, our construction $\Phi'$ is also closely related to a recent bijection by Kenyon et al.~\cite{KMSW}.   

\subsection{Separating decompositions}
For $Q\in\cQ_{i,j}$, let $s,s',t,t'$ be the outer vertices of $Q$ 
in clockwise order around the outer face, with $s$ the one
at the root.  
A \emph{separating decomposition} of $Q$ is given by an orientation and coloration (blue or red) of each edge of $Q$ such that:
\begin{itemize}
 \item
 All edges incident to $s$ (resp. $t$) are incoming blue (resp. incoming red).
 \item
  Every vertex $v\notin\{s,t\}$ has one outgoing edge in each color. Moreover, if $v$ is white (resp. black), then every incoming edge at $v$
   has the color of the next outgoing edge in clockwise (resp. counterclockwise) order around $v$, see Figure~\ref{fig:separating}(a). 
 \end{itemize} 
  
  An example of separating
decomposition is given in Figure~\ref{fig:separating}(b).  
It can be shown~\cite{FOP95} that the blue edges form a spanning tree of $Q\backslash t$ and the red 
edges form a spanning tree of $Q\backslash s$. By a slight abuse of notation, we also call \emph{separating decomposition} a pair $S=(Q,X)$,
where $Q$ is a simple quadrangulation, and $X$ is a separating decomposition on $Q$. 
We let $\Sep_{i,j}$ be the set of separating decompositions with $i+2$ black vertices and $j+2$ white vertices. 
A separating decomposition is called \emph{minimal}  if it has no clockwise cycle. 

\begin{figure}[h!]
\begin{center}
\includegraphics[width=12cm]{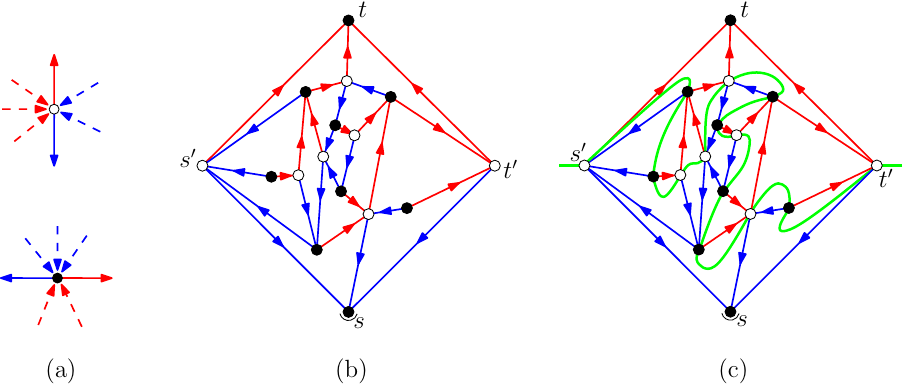}
\end{center}
\caption{(a) Local rule of separating decompositions for vertices not in $\{s,t\}$. (b) A separating 
decomposition. (c) The equatorial line (in green).}
\label{fig:separating}
\end{figure}

\begin{figure}[h!]
\begin{center}
\includegraphics[width=12cm]{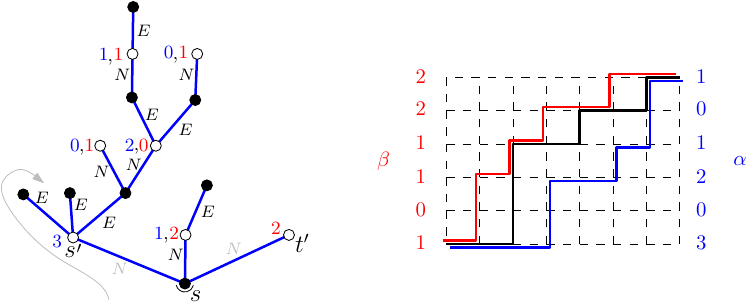}
\end{center}
\caption{Left: the blue tree of the separating decomposition of Figure~\ref{fig:separating}(b), with the indication of blue and red indegrees at white vertices. Right: the corresponding (by $\Phi'$) triple of walks.}
\label{fig:bijection}
\end{figure}

A general property of outdegree-constrained orientations of planar maps~\cite{Fe03}  
ensures that each simple quadrangulation
 has a unique minimal  separating decomposition. Hence, $\cQ_{i,j}$ is in bijective correspondence to the set of separating decompositions in $\Sep_{i,j}$ that are minimal.   

\subsection{Presentation and statement of the bijection}
We first recall the bijection $\Phi$ introduced in~\cite{FPS09} 
 between $\Sep_{i,j}$ and $\cR_{i,j}$. For  $S\in\Sep_{i,j}$, we let 
$\Tblue$ be the blue tree of $S$, and let $v_0,\ldots,v_{j+1}$ be the  
white vertices, ordered according to first visit in a clockwise walk around $\Tblue$ starting
at the root. For $r\in\llbracket 0,j\rrbracket$, let $\beta_r$ be the number of incoming red edges at $v_{r+1}$. 
Then $\Phi(S)$ is the triple of walks $(\gamma_{\low},\gamma_{\mid},\gamma_{\upp})$ (written here as binary words) obtained as follows: 
\begin{itemize}
\item
Let $w_{1}$ be the word obtained from a clockwise walk around $\Tblue$, where we write an $E$ each time we traverse an edge from white to black while getting farther from the root,  and write an $N$ each time we traverse an edge from white to black while getting closer to the root. 
Since the rightmost child $t'$ of $s$ is a leaf of $\Tblue$, $w_1$ ends with two occurences of $N$.
Let $\gamma_{\low}$ be $w_1$ without its two last $N$ letters,
\item
Let $w_2$ be the word obtained from a clockwise walk around $\Tblue$, where we write an $N$ each time we traverse an edge from black to white while getting farther from the root,  and write an $E$ each time we traverse an edge from black to white while getting closer to the root. Then $w_2$ starts with $N$, and (again due to the rightmost child of $s$ being a leaf) ends with $N$. 
Let $\gamma_{\mid}$ be $w_2$ without its first and last $N$ letters.
\item 
The walk $\gamma_{\upp}$ is $E^{\beta_0}NE^{\beta_1}\ldots NE^{\beta_{j}}$.
\end{itemize}

We now introduce a mapping $\Phi'$ that is a modified version of $\Phi$ (see Figure~\ref{fig:bijection} for an example), better suited in view
of the specialization to generalized Tamari intervals.   
 For $S$ a separating decomposition, 
 $\Phi'(S)$ is the triple  $(\gamma_{\low},\gamma_{\mid},\gamma_{\upp})$ of walks 
where $\gamma_{\mid}$ and $\gamma_{\upp}$ are obtained as above, and $\gamma_{\low}$ is modified to be 
$E^{\alpha_0}NE^{\alpha_1}\ldots NE^{\alpha_{j}}$, with $\alpha_r$ the number of incoming blue edges at $v_r$ for $r\in\llbracket 0,j\rrbracket$. 
For instance, for the separating decomposition of Figure~\ref{fig:bijection}, $\gamma_{\low}$ is $EEENEENENNNE$ when applying $\Phi$, and is $EEENNEENENNE$ when applying $\Phi'$. 
 
\begin{theo}\label{theo:1}
For $i,j\geq 0$, the mapping $\Phi'$ is a bijection between $\Sep_{i,j}$ and $\cR_{i,j}$. In addition,
for $S\in\Sep_{i,j}$, $S$ is minimal if and only if $\Phi'(S)\in\cG_{i,j}$. Hence, $\Phi'$ yields a bijection
between $\cQ_{i,j}$ and $\cG_{i,j}$.  
\end{theo}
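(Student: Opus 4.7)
The plan is to prove the theorem in two steps: (1) that $\Phi'$ is a bijection from $\Sep_{i,j}$ onto $\cR_{i,j}$, and (2) that minimality of $S$ corresponds to $\Phi'(S)\in\cG_{i,j}$. Combined, these yield the bijection between $\cQ_{i,j}$ and $\cG_{i,j}$, since minimal separating decompositions are in canonical bijection with $\cQ_{i,j}$.

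For step (1), I would bootstrap from the original bijection $\Phi$ of~\cite{FPS09}. Since $\Phi$ and $\Phi'$ coincide on $\gammam$ and $\gammau$, it suffices to show that the new lower walk $\gammal=E^{\alpha_0}NE^{\alpha_1}\cdots NE^{\alpha_j}$, encoded via blue indegrees $\alpha_r$ at white vertices $v_0,\ldots,v_j$, is equivalent data to the old contour-based encoding once the blue tree $\Tblue$ is fixed. First I would verify that $\gammal$ has endpoint $(i,j)$ by summing the $\alpha_r$'s and using that $\Tblue$ is a spanning tree of $Q\setminus t$. Second I would check that $\gammal$ is componentwise below $\gammam$, by a local induction along the contour traversal of $\Tblue$ that tracks the balance between accumulated blue indegrees and contour height. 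Third, I would describe an explicit inverse: reconstruct $\Tblue$ from $\gammam$, attach the blue indegrees dictated by $\gammal$ and red indegrees dictated by $\gammau$, and slot in the remaining red edges to recover a unique $S\in\Sep_{i,j}$.

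For step (2), the strategy is to match the covering operations on the two sides: clockwise 4-face flips on $S$ (which, by the Felsner--Propp theory of distributive lattices on $\alpha$-orientations, characterize minimality via absence of clockwise cycles), and push-covers $\gammam\to\push_p(\gammam)$ in $\Tam(\gammal)$. I would show that under $\Phi'$ each clockwise 4-face of $S$ produces a specific $EN$-corner of $\gammam$, and that the flip of this face corresponds exactly to the push at that corner. Tracking the evolution of the triple under repeated flips, $S$ being minimal should then be equivalent to $\gammam$ lying at the bottom of a push-chain terminating at $\gammau$, i.e.\ to $\gammam\leq\gammau$ in $\Tam(\gammal)$. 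The reverse direction would be obtained by locating an explicit clockwise 4-face whenever the Tamari inequality fails.

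The main obstacle is making the clockwise 4-face $\leftrightarrow$ push-corner correspondence explicit. The local rule in Figure~\ref{fig:separating}(a) rigidly constrains the arrangement of blue and red incidences around each internal vertex, and one must trace how a clockwise 4-face imprints a specific $E$-then-$N$ pattern on $\gammam$ with the correct matching pointer $p'$ governed by the $\ell$-function derived from $\gammal$. The indegree-based encoding of $\gammal$ in $\Phi'$ (as opposed to the contour-based encoding in $\Phi$) is essential here, because $\ell$-values of $\Tam(\gammal)$ translate most naturally into indegree patterns on $\Tblue$. A careful case analysis of how the contour of $\Tblue$ navigates the flipped 4-face is expected to close the argument.
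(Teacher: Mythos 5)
Your step (1) takes a different route from the paper, which does not bootstrap from $\Phi$ but instead factors $\Phi'=\xi\circ\chi$ through an intermediate family of \emph{blue-red arc diagrams}: $\chi$ reduces the 2-book embedding of $S$ by erasing $s,t$ and all edges leaving white vertices, and $\xi$ reads the three walks off the counts $\alpha_r,\mu_r,\beta_r$ of blue, black and red dots per segment. Your sketch is plausible in outline, but the crux --- why for an \emph{arbitrary} triple in $\cR_{i,j}$ the red edges can be ``slotted in'' in exactly one way (surjectivity and injectivity of the inverse) --- is precisely what requires an argument; in the paper this is carried by the alternating condition of 2-book embeddings and the explicit reconstruction of $\chi^{-1}$.

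The genuine gap is in step (2). Your plan rests on the claim that flipping a clockwise 4-cycle of $S$ fixes $\gammal$ and $\gammau$ and acts on $\gammam$ as a single push in $\Tam(\gammal)$. This is not established and is not true in general: a flip changes the blue tree, hence the first-visit order of the white vertices, hence the indegree sequences $(\alpha_r)$ and $(\beta_r)$, so all three walks can change. (Note also that non-minimality yields a clockwise 4-\emph{cycle}, not necessarily a face contour.) Even granting some flip/push dictionary, the logic does not close: minimality is the \emph{absence} of clockwise cycles in $S$, whereas $\gammam\leq\gammau$ is the \emph{existence} of a push-chain from $\gammam$ up to $\gammau$; your dictionary would at best identify the flippable cycles of $S$ with the pushes available at $\gammam$, yielding ``$S$ minimal iff $\gammam$ is maximal in $\Tam(\gammal)$'', which is not the desired condition. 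The paper avoids dynamics entirely: it shows that both ``$S$ has a clockwise cycle'' and ``$\gammam\not\leq\gammau$ in $\Tam(\gammal)$'' are equivalent to the presence of a single static pattern (a \emph{Z-pattern}) in the arc diagram $\chi(S)$ --- the first equivalence via the clockwise-4-cycle lemma of~\cite{FOP95} read in the 2-book embedding, the second via the bracket-vector criterion for the $\nu$-Tamari order from~\cite{ceballos2018nu} together with a counting argument on the dots lying between the two ends of a minimum Z-pattern. To repair your proof you would need to replace the flip/push matching by such a static witness: from a clockwise 4-cycle extract a violation $U_k>V_k$ of the bracket-vector inequality, and conversely.
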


The proof is delayed to Section~\ref{sec:proof}. 

\begin{remark}\label{rk:further}
 For $(\gammal,\gammam,\gammau)\in\cR_{i,j}$,  
a value $r\in\llbracket 1,j\rrbracket$ is called a \emph{level-value of type $(p,q)$}  
if $\beta_{r-1}=p$ and $\alpha_r=q$. 
On the other hand, for $Q\in\Sep_{i,j}$, an inner white vertex is said to be \emph{of type $(p,q)$} 
if it has $p$ incoming red edges and $q$ incoming blue edges.  
Then clearly in the bijection 
$\Phi'$, $\alpha_0$ is mapped to the degree of $s'$ minus $2$, $\beta_j$ is mapped to the degree of $t'$ minus $2$, and 
each level-value in $\llbracket 1,j\rrbracket$ corresponds to an inner white vertex of the same type.
\end{remark}
 

\begin{remark}
From the parameter-correspondence in Remark~\ref{rk:further}, we can see that our bijection for $\cG_{i,j}$ differs  (under the classical
correspondence of $\cN_{i,j}$ with $\cQ_{i,j}$) from the one in~\cite{FP17}. 
Indeed, in their bijection, the 
parameter $\alpha_0$ corresponds to the length (minus $1$)
of the leftmost branch in their labelled DFS trees. But that parameter does not correspond to a face-degree (e.g. for one of the two faces adjacent to the root-edge) nor to a vertex-degree (e.g. for one of the two extremities of the root-edge)  in the associated  non-separable map. 
\end{remark}

Remark~\ref{rk:further} also yields a model of maps for $m$-Tamari intervals. 
 For $m\geq 1$, we let $\cQ_n^{(m)}$ be the subfamily of $\cQ_{mn,n}$ where each inner white vertex has $m$ incoming blue edges in the minimal separating decomposition, and $s'$ has no incoming blue edge.  
Then $\Phi'$ yields a bijection between $\cQ_n^{(m)}$ and intervals of $\Tam_n^{(m)}$. 
It is known~\cite{BFP12} (extension of the formula for $m=1$ discovered in~\cite{Ch06}) that the number $I_n^{(m)}$ of intervals in $\Tam_n^{(m)}$ is given by the beautiful formula
\begin{equation}\label{eq:mtam}
I_n^{(m)}=\frac{m+1}{n(mn+1)}\binom{(m+1)^2n+m}{n-1}.
\end{equation}
The family $\cQ_{n}^{(1)}$ is in bijection (via contraction of the blue edges directed toward 
a white vertex~\cite[Section 5]{FPS09}) with simple triangulations with $n+3$ vertices, endowed with their minimal Schnyder wood. 
Under this correspondence, one can check that our bijection coincides with the one by Bernardi and Bonichon~\cite{BB09}  
(recalled and exploited in Section~\ref{sec:bij_sch}) between $\cI_n$ and simple triangulations with $n+3$ vertices.
Simple triangulations with $n+3$ vertices can then be bijectively enumerated (we will recall a correspondence
to certain mobiles in Section~\ref{sec:bij_sch}), giving a bijective proof of~\eqref{eq:mtam} for the case $m=1$.  
It would be interesting to provide a bijective proof of~\eqref{eq:mtam} working for all $m\geq 1$, based on such an approach (edge-contractions or similar operations applied to maps in $\cQ_n^{(m)}$, so as to obtain maps or hypermaps amenable to bijective enumeration). 

\medskip
\medskip

\begin{figure}[h!]
\begin{center}
\includegraphics[width=12cm]{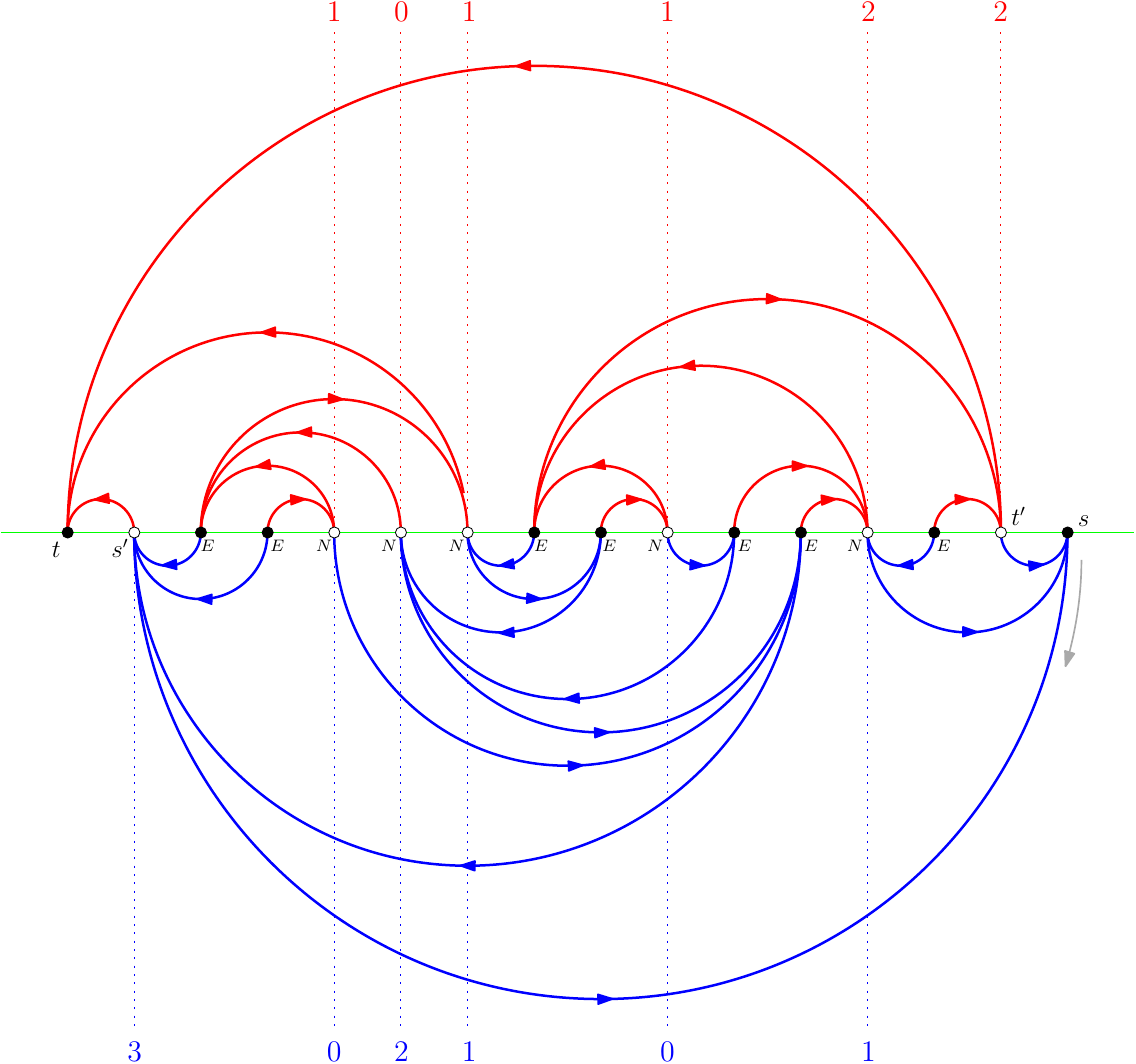}
\end{center}
\caption{The 2-book embedding of the separating decomposition of Figure~\ref{fig:separating}(b).
The first-visit order of white vertices around the blue tree corresponds to the left-to-right order along 
the line, and the middle word (when applying $\Phi'$) is obtained by reading the inner vertices left to right along the line, writing $E$ (resp. $N$) at every black (resp. white) vertex.} 
\label{fig:book_embedding}
\end{figure}

\subsection{A more symmetric formulation of the bijection $\Phi'$}\label{sec:symmetr}   
We reformulate here the bijection $\Phi'$ in a more symmetric way (in terms of the roles played by blue and red edges).  
We recall from~\cite{FFNO11, FKHO10} the notions of equatorial line and 2-book embedding of a separating decomposition $S=(Q,X)$.  
A separating decomposition has the property that each inner face $f$ has two bicolored corners. We may then draw a green curve inside $f$ to join these two corners.   
It is shown in Lemma 3.1 of~\cite{FFNO11} that the union of these green curves forms a simple curve $L$   
from $s'$ to $t'$ that visits all inner faces and all vertices except $s$ and $t$; then $L$ is called the \emph{equatorial line} of $S$,  see Figure~\ref{fig:separating}(c). 

One can then stretch $L$ into a horizontal line where the vertices of $Q$ are equally spaced (with $t$ as the left
extremity and $s$ as the right extremity), and planarly draw the blue (resp. 
red) edges of $S$ as half-circles in the lower (resp upper) half-plane, see Figure~\ref{fig:book_embedding}. 
This canonical drawing is called the 
\emph{2-book embedding} of $S$. It is shown in Theorem 2.14 of~\cite{FKHO10} (see also Proposition 3.3 in~\cite{FFNO11}) 
that the 2-book embedding of $S$ satisfies the following so-called \emph{alternating condition}: 

\medskip

{\bf (A)}: for all blue (resp. red) edges the right (resp. left) extremity is black. 

\medskip

This property, and planarity of the set of arcs (see the discussion on fingerprints in Section 3 of~\cite{FFNO11}), implies that when applying the bijection $\Phi'$, the word corresponding to the middle walk is exactly the word read by traversing the line from left to right (excluding the outer vertices $\{s,t,s',t'\}$), writing $E$ (resp. $N$) every time we meet a 
black (resp. white) vertex, see Figure~\ref{fig:book_embedding}. It also implies that the white vertices $v_0,\ldots,v_{j+1}$ (ordered according to first visits in a clockwise walk around the blue tree starting at the root-corner)
 are ordered left-to-right  along the line. 

For $S\in\Sep_{i,j}$ we let $\tau(S)$ be the separating decomposition obtained as the half-turn rotation of  $S$, i.e.,
the roles of $s$ and $t$ are exchanged and the colors are swapped. On the other hand, for 
$R=(\gammal,\gammam,\gammau)\in\cR_{i,j}$ we let $\tau(R)$ 
be the half-turn rotation of $R$, i.e., 
if $\mathrm{mir}(c_1,\ldots,c_n):=(c_n\ldots,c_1)$ denotes the mirror of a word on $\{N,E\}$,  
then $\tau(R):=(\mathrm{mir}(\gammau),\mathrm{mir}(\gammam),\mathrm{mir}(\gammal))$. 
Clearly, the symmetric reformulation of $\Phi'$ 
ensures that if $S\in\Sep_{i,j}$ is mapped by $\Phi'$ to $R\in\cR_{i,j}$, 
 then $\tau(S)$ is mapped to $\tau(R)$. Since $\tau$ is an involution on $\Sep_{i,j}$ that preserves the property
of being minimal, we obtain:

\begin{corollary}\label{coro:halfturn}
For $R\in\cR_{i,j}$ we have $\tau(R)\in\cG_{i,j}$ if and only if $R\in\cG_{i,j}$.
\end{corollary}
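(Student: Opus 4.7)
The plan is to combine the bijective content of Theorem~\ref{theo:1} with the intertwining $\Phi'\circ\tau=\tau\circ\Phi'$ (established just before the corollary via the symmetric 2-book formulation) and a direct check that $\tau$ acts on $\Sep_{i,j}$ as an involution preserving minimality. Once these three pieces are in hand, the corollary follows formally: for $R\in\cR_{i,j}$, set $S:=(\Phi')^{-1}(R)$; by Theorem~\ref{theo:1}, $R\in\cG_{i,j}$ iff $S$ is minimal, iff $\tau(S)$ is minimal, iff $(\Phi')^{-1}(\tau(R))$ is minimal (applying the intertwining in the form $\tau\circ(\Phi')^{-1}=(\Phi')^{-1}\circ\tau$), iff $\tau(R)\in\cG_{i,j}$.

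First I would note that $\tau$ is a well-defined involution on $\Sep_{i,j}$. The local rule of Figure~\ref{fig:separating}(a) is invariant under the simultaneous swap of the two colors and a half-turn rotation, since the picture at an inner vertex carries a 2-fold rotational color-swap symmetry; and the boundary conditions match, because the edges incoming blue at old~$s$ become edges incoming red at new~$t$ after color-swap and relabeling, and symmetrically for the other outer black vertex. Applying $\tau$ twice restores the original separating decomposition.

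The main content is then to verify that $\tau$ preserves minimality, i.e.\ the absence of a clockwise directed cycle. A half-turn rotation of the plane is orientation-preserving, so every directed cycle in $S$ maps to a directed cycle in $\tau(S)$ with the same sense of rotation; and the color-swap only relabels edges without altering their orientations, so it cannot change the clockwise/counter-clockwise character of any directed cycle. Thus $S$ has no clockwise cycle iff $\tau(S)$ has none. This geometric observation is the only step requiring a genuine argument; everything else is bookkeeping based on the properties already recorded in the paragraph preceding the corollary.
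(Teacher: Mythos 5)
Your proposal is correct and follows essentially the same route as the paper: the intertwining $\Phi'\circ\tau=\tau\circ\Phi'$ from the symmetric 2-book formulation, the fact that $\tau$ is a minimality-preserving involution on $\Sep_{i,j}$, and Theorem~\ref{theo:1}. You merely make explicit two points the paper leaves as assertions (that $\tau$ is well defined on $\Sep_{i,j}$ and that an orientation-preserving half-turn plus color swap cannot turn a clockwise cycle into a counterclockwise one), and your justifications of both are sound.
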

To be precise, in our proof of Theorem~\ref{theo:1} given next,    
we will use a characterization of elements of $\cG_{i,j}$
in terms of a certain arc diagram representation (Lemma~\ref{lem:Z_pattern}), such that 
 Corollary~\ref{coro:halfturn} already follows from this representation.

\subsection{Proof of Theorem~\ref{theo:1}}\label{sec:proof}
\subsubsection{Proof that $\Phi'$ is a bijection from $\Sep_{i,j}$ to $\cR_{i,j}$}

We define a \emph{blue-red arc diagram} as obtained by concatenating (for some $j\geq 0$) 
 $j+1$ horizontal segments $S_0,\ldots,S_j$, where for $r\in\llbracket 0,j\rrbracket$  
the segment $S_r$ is made of three successive (possibly empty) groups of dots: blue dots, then black dots, then red dots, such 
that the total numbers of blue dots, black dots, and red dots are the same;  
 in addition, in the upper (resp. lower) half-plane, there is a red (resp. blue) planar matching of the black dots with the red dots (resp. of the blue dots with the black dots), 
see the right-part of Figure~\ref{fig:arc} for an example. 
We denote by $\alpha_r$ (resp. $\mu_r,\beta_r$) the number of blue (resp. black, red) dots in $S_r$, for $r\in\llbracket 0,j\rrbracket$. 
We let $\cA_{i,j}$ be the set of blue-red arc diagrams made of $j+1$ segments
and having $i$ black dots.  
There is a straightforward bijection $\xi$ from $\cA_{i,j}$ to $\cR_{i,j}$:  the 
triple of walks $(\gammal,\gammam,\gammau)\in\cR_{i,j}$ associated to $A\in\cA_{i,j}$ is the one where $\gammal$ (resp. $\gammam$, $\gammau$)
has $\alpha_r$ (resp. $\mu_r$, $\beta_r$) East steps at height $r$ for $r\in\llbracket 0,j\rrbracket$. 
The property that $\gammam$ is weakly above $\gammal$ is equivalent to $\sum_{k=0}^r\alpha_k\geq \sum_{k=0}^r\mu_k$
for all $r\in\llbracket 0,j\rrbracket$, which is equivalent to the fact that the blue dots can be matched to the black
dots. Similarly, the property that $\gammau$ is weakly above $\gammam$ is equivalent to the fact that the black dots 
can be matched to the red dots. 

We will now describe a bijection $\chi$ from $\Sep_{i,j}$ to $\cA_{i,j}$. Before giving it, we state the following property that refines (A) and follows from~\cite[Sec.3]{FFNO11} (see
the discussion about uniqueness of alternating layouts of rooted plane trees before Proposition 3.3): 
\begin{lemma}\label{lem:2book}
Every separating decomposition has a unique 2-book embedding satisfying (A), the vertices being equally spaced on the equatorial line, with $t$ as the leftmost vertex and $s$
as the rightmost vertex. In this representation, for each vertex $v\notin\{s,t\}$, 
the outgoing edge (edge going to the parent) of $v$ in the upper (resp. lower) half-plane is the outermost arc incident to $v$.   
\end{lemma}

The bijection $\chi$ is done in two steps. Let $\Sep_{i,j}'$ be the set
of arc-diagrams specified as follows (see the left-part of Figure~\ref{fig:arc} for an example): 
\begin{itemize}
\item
There are $i+j+2$ vertices aligned along the horizontal axis, among which $i$ are black and $j+2$ are white, the left-most point $s'$ and rightmost point $t'$ being white.
\item
In the upper half-plane, there is a planar arc-system, such that the left (resp. right) extremity of every arc is black (resp. white), and every black vertex is incident to 
exactly one arc,
\item
In the lower half-plane, there is a planar arc-system, such that the left (resp. right) extremity of every arc is white (resp. black), and every black vertex is incident to 
exactly one arc.
\end{itemize} 

\begin{figure}
\begin{center}
\includegraphics[width=13.4cm]{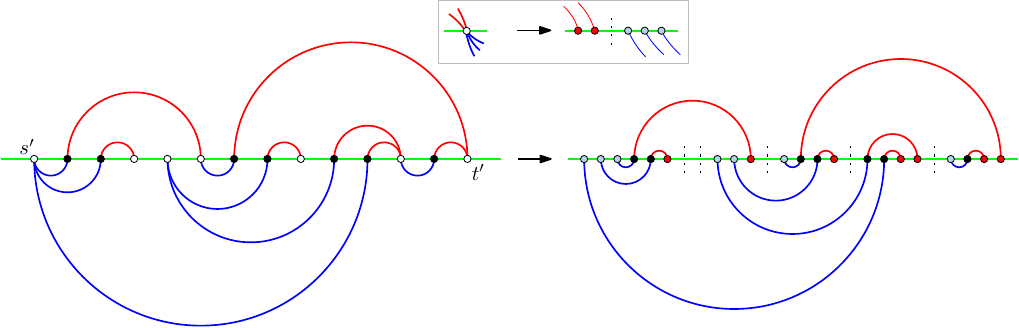}
\end{center}
\caption{Left: the image (in $\Sep_{7,5}'$) of the separating decomposition of Figure~\ref{fig:book_embedding} by the mapping $\chi_1$. Right: the corresponding blue-red arc diagram 
(in $\cA_{7,5}$) by the mapping $\chi_2$.}
\label{fig:arc}
\end{figure}

For $S\in\Sep_{i,j}$, let $S'$ be obtained as follows, see the left-part of Figure~\ref{fig:arc}:
\begin{itemize}
\item
take the $2$-book embedding of $S$ with property (A),
\item
delete the vertices $s,t$ and their incident edges,
\item
erase all edges with a white origin.
\end{itemize}
Let $\chi_1$ be the mapping that associates $S'$ to $S$.

\begin{lemma}
The mapping $\chi_1$ is a bijection from $\Sep_{i,j}$ to $\Sep_{i,j}'$.
\end{lemma}
\begin{proof}
For $S\in\Sep_{i,j}$, the fact that $\chi_1(S)\in\Sep_{i,j}'$ is a direct consequence of the property (A) of the 2-book embedding. 

The inverse construction is given as follows.    
For $S'\in\Sep_{i,j}'$, insert two black vertices at the left and right extremity on the line, called respectively $t$ and $s$. 
For $w$ a white vertex on the horizontal line, the \emph{upper parent} of $w$ is defined as follows: if 
$w$ is ``covered" by a red arc $a$ ($a$ is the first arc crossed by an upward vertical line starting from $w$), then the upper-parent of $w$ 
is the (black) vertex $b$ at the left extremity of $a$; otherwise the upper-parent of $w$ is $t$. Similarly, the \emph{lower parent} of $w$ is defined as follows: 
if $w$ is ``covered" by a blue arc $a$ ($a$ is the first arc crossed by a downward vertical line starting from $w$), then the lower parent of $w$ 
is the (black) vertex $b$ at the right extremity of $a$; otherwise the lower parent of $w$ is $s$. 
We let $S$ be obtained from $S'$ by orienting its arcs from black to white, then for each white vertex $w$, by inserting a red (resp. blue) arc from $w$ 
to its upper parent (resp. to its lower parent) in the upper (resp. lower) half-plane.  We claim that $S$ is a separating decomposition in $\Sep_{i,j}$, 
endowed with its unique $2$-book embedding as characterized in Lemma~\ref{lem:2book}. First, the arc insertions yield no crossing (e.g. 
for each upper arc $a$ of $S'$, letting $\Sigma_a$ be the face of the arc-system on the interior-side of $a$, all inserted arcs connected to the left extremity of $a$ occur within $\Sigma_a$).   
The alternation property (A) is clearly satisfied, as well as the fact that the outgoing edge of every vertex in the upper (resp. lower) half-plane is the outermost one. 
These two properties (and planarity of the arc-system) also easily imply that the graph in the upper (resp. lower) half-plane is a tree rooted at $t$ (resp. $s$), indeed
the directed path in the upper (resp. lower) half-plane starting from a given vertex gives a sequence of edges whose arcs must have increasing width, hence the path can not cycle, it thus has to end at the unique sink, which is $t$ (resp. $s$). Hence, the structure we obtain is indeed a separating decomposition $S$ endowed with its unique $2$-book embbedding satisfying
(A). Let $\delta_1$ be the mapping that sends $S'$ to $S$. Clearly, we have $\chi_1\circ\delta_1(S')=S'$ for every $S'\in\Sep_{i,j}'$. 
To check that $\delta_1\circ\chi_1(S)=S$ for every $S\in\Sep_{i,j}$, we observe that, for every white vertex $w\in S'=\chi_1(S)$, the unique black vertex allowed
to receive the outgoing arc of $w$ in the upper (resp. lower) half-plane is the upper parent (resp. lower parent) of $w$. Indeed, this is the only choice so as to satisfy planarity and 
the property that the outgoing edge at every black vertex $b$ is the outermost arc incident to $b$.  
\end{proof}

We now describe the second part (from $\Sep_{i,j}'$ to $\cA_{i,j}$), which is illustrated in Figure~\ref{fig:arc}.     
Let $S'\in\Sep_{i,j}'$, with $v_0,\ldots,v_{j+1}$ the white vertices from left to right along the horizontal axis. 
For $r\in\llbracket 0,j\rrbracket$, let $\alpha_r$ be the blue degree of $v_r$, and let $\beta_{r}$ be the red degree of $v_{r+1}$.  
We turn $v_0$ into a group of $\alpha_0$ blue dots, and $v_{j+1}$ into a group of $\beta_j$ red dots. 
Then, for $r\in\llbracket 1,j\rrbracket$, we turn $v_r$ into a group of $\beta_{r-1}$ red dots, followed by a segment-separator, followed by a group of $\alpha_{r}$ blue dots. 
We let $A$ be the obtained arc-diagram, and let $\chi_2$ be the mapping that associates $A$ to $S'$. 

\begin{lemma}
The mapping $\chi_2$ is a bijection from $\Sep_{i,j}'$ to $\cA_{i,j}$. 
\end{lemma}
\begin{proof}
Clearly, with the notation above, $A=\chi_2(S')$ is a blue-red arc-diagram, which has $j+1$ segments (initially, there is one segment, then each white vertex in $v_1,\ldots,v_j$ creates
a separator), with $\alpha_r$ (resp. $\beta_r$) blue (resp. red) dots in the $r$th segment for $r\in \llbracket 0,j\rrbracket$. 
The number $\mu_r$ of black dots in the $r$th segment is the number of black vertices between $v_r$ and 
$v_{r+1}$ on the horizontal line of $S'$ for $r\in \llbracket 0,j\rrbracket$. Since the number $i$ of black vertices becomes the number of black dots, $A$ is in $\cA_{i,j}$. The 
inverse mapping $\delta_2$ is defined so as to reverse the construction. For $A\in\cA_{i,j}$, we contract the group of blue dots in the first segment (with their incident blue arcs) into a white vertex $s'$ having blue degree $\alpha_0$ (and no incident red arc), we  contract the group of red dots in the last segment (with their incident red arcs) into a white vertex $t'$ having red degree $\beta_j$ (and no incident blue arc), and for each $r\in \llbracket 1,j\rrbracket$, we contract the group of $\beta_{r-1}$ red dots in the $(r-1)$th segment (with their
incident red arcs) and the group of $\alpha_{r}$ blue dots in the $r$th segment (with their incident blue arcs) into a white vertex $v_r$, which has blue degree $\alpha_{r}$ and
red degree $\beta_{r-1}$. We obtain an arc-diagram $S'\in\Sep_{i,j}'$ (the number of white vertices is $j+2$, and the number of black vertices is $i$, as it is equal to the number of 
black dots in $A$). Let $\delta_2$ be the mapping that associates $S'$ to $A$. By construction, the two mappings $\chi_2,\delta_2$ are inverse of each other. 
\end{proof}

To conclude, $\chi:=\chi_2\circ\chi_1$ is a bijection from $\Sep_{i,j}$ to $\cA_{i,j}$, and by construction, $\xi\circ\chi$ coincides with the symmetric reformulation of $\Phi'$ as 
given in Section~\ref{sec:symmetr}. Hence,  $\Phi'$ is a bijection from $\Sep_{i,j}$ to $\cR_{i,j}$.

\subsubsection{Proof that $S\in\Sep_{i,j}$ is minimal if and only if $\Phi'(S)\in\cG_{i,j}$}\label{sec:proof2} 
In a blue-red arc diagram, a \emph{Z-pattern} is a pair made of a blue arc $a$ and a red arc $a'$
such that the blue extremity of $a$ is enclosed within $a'$ and the red extremity of $a'$ is enclosed within $a$,
see Figure~\ref{fig:rrbb}(a). 

\begin{figure}
\begin{center}
\includegraphics[width=12cm]{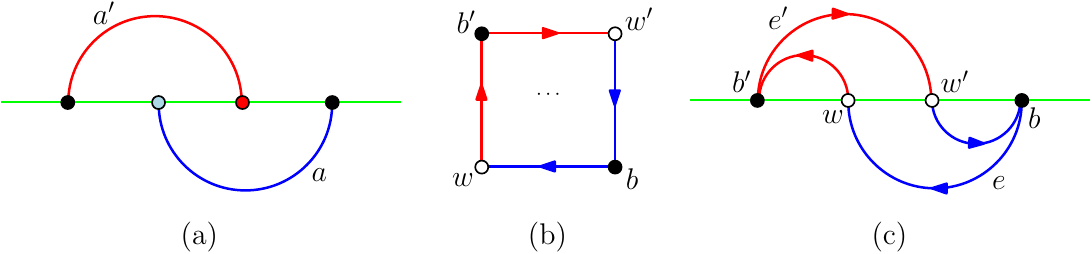}
\end{center}
\caption{(a) A Z-pattern in a blue-red arc diagram (drawing only the two involved arcs and their
extremities). (b) Situation for a  clockwise 4-cycle in a separating decomposition. (c) Situation for a clockwise 4-cycle in the 2-book embedding of a 
separating decomposition (drawing only the four involved vertices and edges).}
\label{fig:rrbb}
\end{figure}

Our proof that $S\in\Sep_{i,j}$ is minimal if and only if $\Phi'(S)\in\cG_{i,j}$ relies on the two following lemmas.

\begin{lemma}
Let $S\in\Sep_{i,j}$ and let $A\in\cA_{i,j}$ be the corresponding blue-red arc diagram (i.e., $A=\chi(S)$). 
Then $S$ is minimal
if and only if $A$ has no Z-pattern. 
\end{lemma} 
\begin{proof}
It is known (see e.g.~\cite[Prop.15]{bernardi2012schnyder}) that, if $S$ is not minimal, then it contains a clockwise 4-cycle $C$ (not necessarily the contour of a face),
and any edge in the interior of $C$ and incident to a vertex $v\in C$ is incoming at $v$. 
The local conditions (Figure~\ref{fig:separating}(a)) imply that the colors are as shown in Figure~\ref{fig:rrbb}(b). Then, in the 2-book embedding, the alternating
property (A) ensures that the $4$ edges of $C$ are as shown in Figure~\ref{fig:rrbb}(c). Hence, the two arcs of $A$ resulting from the two edges of $C$ going out of a black vertex form a Z-pattern. 

Conversely, assume $A$ has a Z-pattern. A Z-pattern is called \emph{minimum} if the distance along the line
between its blue dot and its red dot is smallest possible. 
Let $a,a'$ be a pair forming a minimum Z-pattern.   
Let $e=\{b,w\}$ and $e'=\{b',w'\}$ be the corresponding edges in the 2-book embedding of $S$. 
Let $e_1'$ be the outgoing red edge of $w$ and let $\tilde{b}$ be its black extremity. 
Assume $\tilde{b}\neq b'$. Then, by Lemma~\ref{lem:2book}, the outgoing red edge $e_2'$ of $\tilde{b}$ is in the area between $e_1'$ and $e'$, and 
it ends between $w$ and $w'$ ($w$ excluded, since the quadrangulation is simple). Let $a_2'$ be the arc of $A$ that corresponds to $e_2$. 
Then clearly the pair of arcs $a,a_2'$ forms a Z-pattern 
in $A$, contradicting the fact that the pair $a,a'$ is minimum. Hence $e_1'$ ends at $b'$. Similarly,  
the outgoing blue edge of $w'$ has to end at $b$. Hence the vertices $b,w,b',w'$ form a clockwise 4-cycle, which implies that 
$S$ is not minimal.  
\end{proof}

\begin{figure}
\begin{center}
\includegraphics[width=10.8cm]{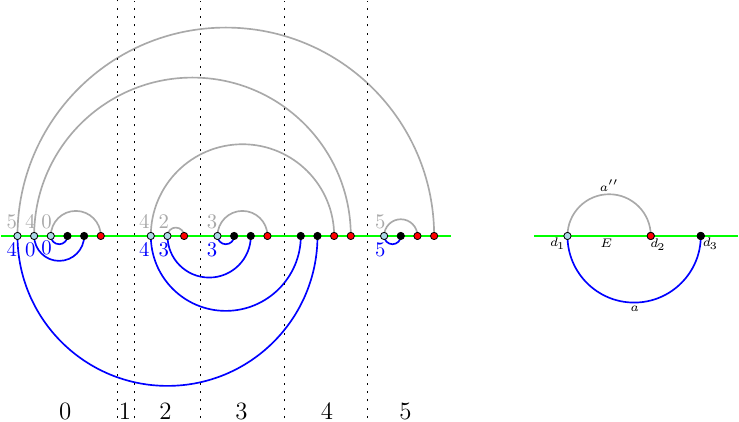}
\end{center}
\caption{Left: the modified arc diagram of the blue-red arc diagram shown in the right part of Figure~\ref{fig:arc}, 
which itself
corresponds to the triple $R=(\gammal,\gammam,\gammau)$ shown in Figure~\ref{fig:bijection}. The bracket vectors
of $\gammam$ and $\gammau$ with respect to $\nu:=\gammal$ are $\vec{U}=(4,0,0,4,3,3,5)$ and $\vec{V}=(5,4,0,4,2,3,5)$.
We have $U_5=3>2=V_5$, hence  $\gammam\not\leq\gammau$ in $\Tam(\nu)$. 
Right: the pattern to be avoided to have $\gammam\leq\gammau$ in $\Tam(\nu)$.}
\label{fig:modif_arc}
\end{figure}

\begin{lemma}\label{lem:Z_pattern}
Let $R=(\gammal,\gammam,\gammau)\in\cR_{i,j}$ and let $A\in\cA_{i,j}$ be the corresponding blue-red arc diagram (i.e.,
 $R=\xi(A)$). Then $R\in\cG_{i,j}$ 
if and only if $A$ has no Z-pattern. 
\end{lemma}
\begin{proof}

The \emph{modified arc diagram} $\tilde{A}$ of $A$ is the same as $A$ except that the arcs in the upper part 
match planarly the blue dots to the red dots instead of matching planarly the black dots to the red dots, see Figure~\ref{fig:modif_arc}.
 (the fact that the planar matching is doable in the upper half-plane follows from the fact that $\gammau$ is weakly above $\gammal$).   
Let $b_1,\ldots,b_i$ be the blue dots of $\tilde{A}$ ordered from left to right, and let $\nu:=\gammal$. 
For $k\in\llbracket 1,i\rrbracket$, let $U_k\in\llbracket 0,j\rrbracket$ be the index of the segment containing
the black dot matched with $b_k$, and let $V_k\in\llbracket 0,j\rrbracket$ be the index of the segment containing
the red dot matched with $b_k$, see the left part of Figure~\ref{fig:modif_arc}. The vectors   
$\vec{U}=(U_1,\ldots,U_i)$ and  $\vec{V}=(V_1,\ldots,V_i)$ are the \emph{bracket vectors}~\cite{ceballos2018nu} 
of $\gammam$ and $\gammau$
with respect to $\nu$ (compared to \cite{ceballos2018nu} we omit the fixed underlined entries).    
It is shown in Section~4 of~\cite{ceballos2018nu} that
 $\gammam\leq \gammau$ in $\Tam(\nu)$ iff $\vec{U}\leq \vec{V}$ (component by component). 
  
Clearly, this is equivalent to the fact
that the modified arc diagram $\tilde{A}$ avoids the pattern shown in the right part of Figure~\ref{fig:modif_arc}, i.e., for each blue point, its 
matched red point (via the incident arc in the upper half-plane) is on the right of its matched black point (via the incident arc in the lower half-plane).  
Indeed, the index of the segment to which the red dot belongs has to be greater or equal to the index of the segment to which the black dots belongs; 
since the group of red dots comes after the group of black dots in each segment, we conclude that the red dot has to be on the right of the black dot. 

Assume $R\notin\cG_{i,j}$. Then $\tilde{A}$ contains a pattern as in the right part of Figure~\ref{fig:modif_arc}. In this pattern, let $d_1,d_2,d_3$ be the blue dot, red dot and black dot, 
let $a''=(d_1,d_2)$ be the gray arc above and $a=(d_1,d_3)$ the blue arc below. Let $E$ be the 
part of the line strictly between $d_1$ and $d_2$, and   
let $\nblue,\nblack,\nred$ be respectively the numbers of blue dots, black dots,
and red dots in $E$. We have $\nblue=\nred$ since $d_1$ and $d_2$ are matched 
(for the upper diagram). We have $\nblue\geq\nblack$ since in the lower diagram $d_1$ is matched to a black dot 
that is on the right of $d_2$. Hence $\nred\geq\nblack$. In other word, in $E\cup d_2$ 
 we have more red dots than black dots. This implies that, in the upper diagram of $A$,   
there is a red dot in $E\cup d_2$ that is matched to a black dot on the left of $d_1$. 
Hence, if we let $a'$ be the arc formed by this matched pair, then the pair $a,a'$ is a Z-pattern in $A$. 

Assume now that $A$ contains a Z-pattern, and let $a,a'$ be a pair of arcs forming a minimum Z-pattern.  
We denote by $d_1$ the blue extremity of $a$, by $d_2$ the red extremity of $a'$, and by $E$ the 
part of the line strictly between $d_1$ and $d_2$. 
Let $\nblue,\nblack,\nred$ be respectively the numbers of blue dots, black dots,
and red dots in $E$.  
The fact that the pattern is minimum easily implies that there is no arc (neither in the upper nor in the lower
diagram) starting from $E$ and ending outside of $E$. Hence $\nblue=\nblack=\nred$. In the upper diagram of $\tilde{A}$, $d_1$ has thus to be matched with a red dot belonging to $E\cup d_2$. Clearly, this arc $a''$ together with $a$ form a pattern
as in the right part of Figure~\ref{fig:modif_arc}, hence $R\notin\cG_{i,j}$.    
\end{proof}

\medskip
\medskip

\begin{figure}
\begin{center}
\includegraphics[width=12cm]{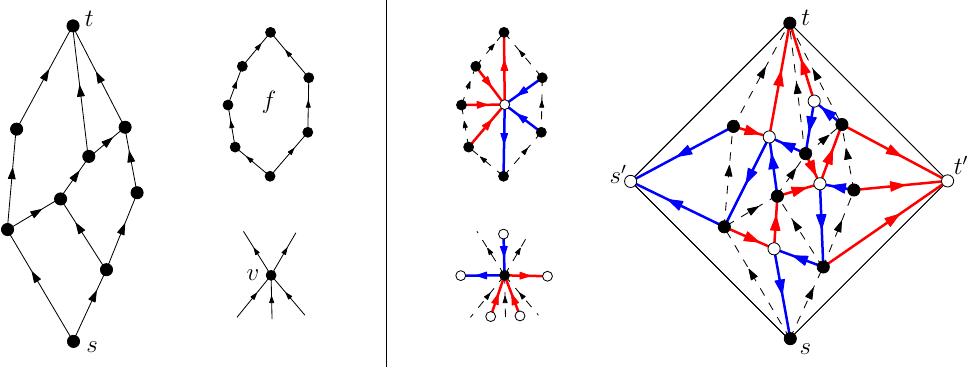}
\end{center}
\caption{Left: a plane bipolar orientation $B$, with the conditions at inner faces and at non-pole vertices. 
Right: the corresponding separating decomposition $S=\iota(B)$.}
\label{fig:bipolar}
\end{figure}

\section{Link to a bijection by Kenyon et al.~\cite{KMSW}} \label{sec:bijKMSW}

A \emph{plane bipolar orientation} is a map endowed
with an acyclic orientation having a unique source $s$ and a unique sink $t$ both incident to the outer face, with $s$ the root-vertex, 
see Figure~\ref{fig:bipolar} left.
It is known~\cite{FOP95} that a plane bipolar orientation is characterized by the following local conditions:
\begin{itemize}
\item
Every non-pole vertex (vertex $\notin\{s,t\}$)  has its incident edges partitioned into a non-empty interval of incoming edges
and a non-empty interval of outgoing edges. 
\item Every inner face has its incident edges partitioned
into a non-empty interval of clockwise edges and a non-empty interval of counterclockwise edges. 
\end{itemize}
An inner face consisting of $p+1$ clockwise edges and $q+1$ counterclockwise edges is said to have \emph{type $(p,q)$}.  
Let $\cB_{i,j}$ be the set of plane bipolar orientations with $i$ non-pole vertices and $j$ inner face,
and let $\cB_{i,j}[a,b]$ be the subset of those
 where the left (resp. right) outer boundary has length $a+1$ (resp. $b+1$). 
Let $\Sep_{i,j}[a,b]$ be the subset of $\Sep_{i,j}$ where $s'$ has degree $a+2$ and $t'$ has degree $b+2$. 
There is a direct bijection $\iota$ (illustrated in Figure~\ref{fig:bipolar}) from $\cB_{i,j}[a,b]$ to 
 $\Sep_{i,j}[a,b]$ where each vertex corresponds to a black vertex, and each inner face corresponds 
to an inner white vertex of the same type~\cite{FOP95}.

On the other hand, a \emph{tandem walk} is a 2d walk where each step is either $(1,-1)$ (SE step), or a step of the form $(-p,q)$ for some $p,q\geq 0$. 
Let $\cW_{i,j}[a,b]$ be the set of tandem walks of length $i+j$ with $i$ SE steps, starting
at $(0,a)$, ending at $(b,0)$ and staying in the quadrant $\mathbb{N}^2$. 
Let $\cR_{i,j}[a,b]$ be the subset 
of elements in $\cR_{i,j}$ where $\alpha_0=a$ and $\beta_j=b$. We now recall a bijection $\sigma$ from $\cR_{i,j}[a,b]$
to $\cW_{i,j}[a,b]$ recently described  in~\cite[Sec 9.1]{BFR19} (where we slightly change the convention: 
the lower and upper walks are constructed line by line here, and column by column in~\cite{BFR19}). For $R=(\gammal,\gammam,\gammau)\in\cR_{i,j}[a,b]$, we let $n=i+j$ and let $u_1,\ldots,u_n$ be the successive steps in $\gammam$. 
For $m\in\llbracket 1,n\rrbracket$, we let $s_m$ be a SE step if $u_m$
is a horizontal step, and let $s_m=(-p,q)$ if $u_m$ is the $r$th vertical step of $\gammam$ (with $r\in\llbracket 1,j\rrbracket$), where $p=\beta_{r-1}$ and $q=\alpha_{r}$. Then $\sigma(R)$ is defined as the walk (in $\cW_{i,j}[a,b]$) of length $n$ starting at $(0,a)$ and with successive steps $s_1,\ldots,s_n$, see Figure~\ref{fig:tandem} for an example. Combining the bijection $\Phi'$ with $\iota$ and $\sigma$, and with the parameter-correspondence of $\Phi'$ stated in Remark~\ref{rk:further}, we obtain:

\begin{prop}\label{prop:bij_tandem}
The mapping $\Lambda:=\sigma\circ\Phi'\circ\iota$ is a bijection from $\cB_{i,j}[a,b]$ to $\cW_{i,j}[a,b]$. Each non-pole 
vertex corresponds to a SE step, and each inner face of type $(p,q)$ corresponds to a step $(-p,q)$. 
\end{prop}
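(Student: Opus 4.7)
The plan is to observe that $\Lambda$ is a composition of three bijections on sets with matching indices, and then to chase the two distinguished statistics through each step. All three ingredients $\iota$, $\Phi'$, and $\sigma$ have been recalled as bijections on the ambient families $\cB_{i,j}$, $\Sep_{i,j}$, $\cR_{i,j}$, so the only things to check are that each ingredient restricts correctly to the $[a,b]$-indexed subfamilies, and that the non-pole vertices and inner faces of type $(p,q)$ travel where the statement claims.

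First I would verify that $\iota$ restricts to a bijection $\cB_{i,j}[a,b]\to\Sep_{i,j}[a,b]$. In the correspondence of~\cite{FOP95}, each vertex of $B$ becomes a black vertex of $\iota(B)$ and each inner face of type $(p,q)$ becomes an inner white vertex of the same type; the two poles of $B$ map to $\{s,t\}$, while the left and right outer boundaries of $B$, of lengths $a+1$ and $b+1$, translate into the incidences forcing $\deg(s')=a+2$ and $\deg(t')=b+2$. Next, Theorem~\ref{theo:1} together with the parameter correspondence stated immediately after it shows that $\Phi'$ restricts to a bijection $\Sep_{i,j}[a,b]\to\cR_{i,j}[a,b]$: the identities $\alpha_0=\deg(s')-2$ and $\beta_j=\deg(t')-2$ match the $a$- and $b$-indices, and inner white vertices of type $(p,q)$ go to level-values of type $(p,q)$. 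Finally, $\sigma$ is by its very construction (as recalled from~\cite{BFR19}) a bijection $\cR_{i,j}[a,b]\to\cW_{i,j}[a,b]$. Composing the three yields the desired bijection.

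For the statistics I would proceed as follows. A non-pole vertex of $B$ becomes, via $\iota$, an inner black vertex of $\iota(B)$; from the 2-book reformulation of $\Phi'$ explained above, inner black vertices correspond precisely to the $E$-letters of the middle walk $\gammam$; and $\sigma$ sends each $E$-letter to an SE step. Similarly, an inner face of $B$ of type $(p,q)$ becomes, via $\iota$, an inner white vertex of the same type, which under $\Phi'$ corresponds to some $r$th $N$-letter of $\gammam$ (with $r\in\llbracket 1,j\rrbracket$) satisfying $\beta_{r-1}=p$ and $\alpha_r=q$, and $\sigma$ then converts this letter into a step $(-p,q)$.

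The only substantive work is this bookkeeping, and I do not expect a real obstacle: the three bijections have been set up precisely so that their statistics align. The mildest point to double-check is that the left/right outer boundary lengths used to define $\cB_{i,j}[a,b]$ match the conventions for $\deg(s')$ and $\deg(t')$ defining $\Sep_{i,j}[a,b]$ under $\iota$, which can be read off directly from Figure~\ref{fig:bipolar}.
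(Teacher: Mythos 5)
Your proposal is correct and follows essentially the same route as the paper, which obtains the proposition in one line by combining $\iota$, $\Phi'$ (with the parameter-correspondence stated after Theorem~\ref{theo:1}), and $\sigma$; your write-up simply makes the bookkeeping explicit, using the 2-book-embedding reformulation to identify inner black (resp.\ white) vertices with the $E$ (resp.\ $N$) letters of $\gammam$ exactly as the paper intends.
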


\begin{figure}
\begin{center}
\includegraphics[width=13.4cm]{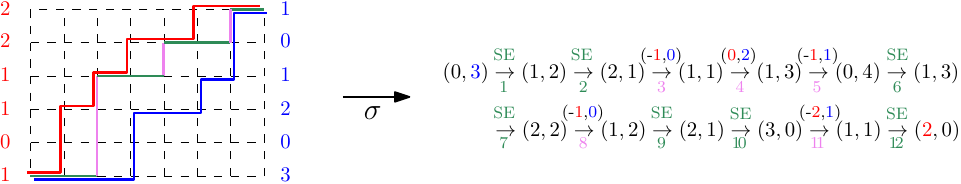}
\end{center}
\caption{Left: a triple of walks in 
$\cR_{7,5}[3,2]$ (the one obtained in Figure~\ref{fig:bijection}). Right: the corresponding (by $\sigma$)
tandem walk in $\cW_{7,5}[3,2]$.}
\label{fig:tandem}
\end{figure}

A bijection from $\cB_{i,j}[a,b]$ to $\cW_{i,j}[a,b]$ with the same parameter correspondence 
has been recently introduced by Kenyon et al.~\cite{KMSW}. 
We recall their consruction, and then prove that it coincides with $\Lambda$. 

For a bipolar orientation $B\in\cB_{i,j}[a,b]$, the \emph{rightmost tree}
 $T(B)$ of $B$ is the spanning tree of $B\backslash \{s\}$ obtained by selecting every edge
of $B$ that is the rightmost outgoing edge at its origin, with the exception that the rightmost outgoing edge $\tilde{e}$ of $s$ is not selected 
(see the left-part of Figure~\ref{fig:bij_KMSW}). 
An \emph{internal edge} is an edge in $T(B)$. An \emph{external edge} is an edge not in $T(B)$, and different from $\tilde{e}$.  
Clearly, there is a one-to-one correspondence between internal edges and non-pole vertices: every internal edge $e$ is the rightmost outgoing edge of exactly 
one non-pole vertex, which is denoted $v(e)$. 
Similarly, there is a one-to-one correspondence between external edges and inner faces: every external edge $e$ is the bottom-left edge of 
exactly one inner face, which is denoted $f(e)$.  

A counterclockwise 
walk around $T(B)$ yields an ordered list\footnote{More generally, such an ordering of the edges can be considered 
for any map endowed with a spanning tree, see~\cite{Be07} where
it is exploited to get bijective insights on the Tutte polynomial.} $\gamma=(e_1,\ldots,e_{i+j})$ of the edges of $B\backslash \tilde{e}$.  
Starting with $\gamma=\emptyset$, each time we walk along an internal edge $e$ away from the root $s$, we 
append $e$ to $\gamma$, and each time we cross the incoming half of an external edge $e$, we append $e$ to $\gamma$.
Let $W$ be the walk starting at $(0,a)$, with successive steps $s_1,\ldots,s_{i+j}$, 
obtained as follows. For every $k\in\llbracket 1,i+j\rrbracket$, if $e_k$ is internal, then $s_k$ is a SE step, while if 
$e_k$ is external, then $s_k=(-p,q)$, where $(p,q)$ is the type of the inner face $f(e_k)$, see Figure~\ref{fig:bij_KMSW} for an example. 
It is shown in~\cite{KMSW} that $W\in\cW_{i,j}[a,b]$, and that the mapping that associates $W$ to $B$ is a bijection from $\cB_{i,j}[a,b]$ to $\cW_{i,j}[a,b]$. 

\begin{remark}
We have presented here the KMSW with mirror conventions compared to~\cite{KMSW} (which relies on the leftmost tree of $B$). 
\end{remark}

\begin{figure}
\begin{center}
\includegraphics[width=12cm]{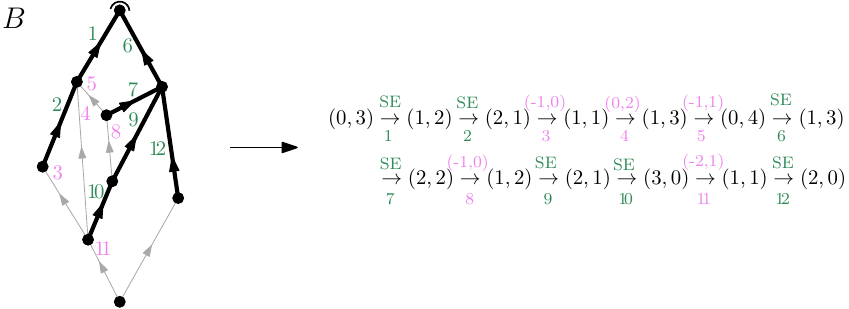}
\end{center}
\caption{A plane bipolar orientation $B\in\cB_{7,5}[3,2]$, and the corresponding tandem walk $W\in\cW_{7,5}[3,2]$ via the KMSW bijection.}
\label{fig:bij_KMSW}
\end{figure}

\begin{figure}
\begin{center}
\includegraphics[width=13.4cm]{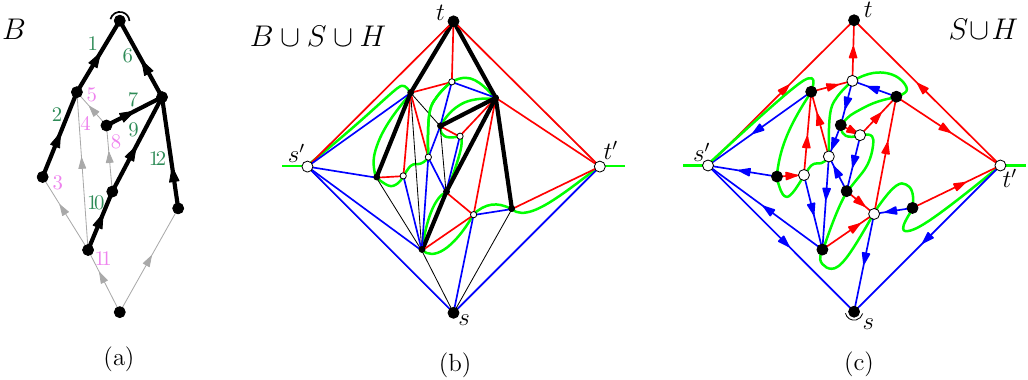}
\end{center}
\caption{Link between the constructions via the equatorial line.}
\label{fig:link}
\end{figure}

\begin{prop}
The bijection $\Lambda$ coincides with the KMSW bijection (with mirror conventions).
\end{prop}
\begin{proof}
Let $B$ be a plane bipolar orientation with $n+1$ edges, and left boundary of length $a+1$. Let $S=\iota(B)$ be the corresponding separating decomposition.  
The bijection $\Lambda$ amounts to visit the inner vertices of $S$ along the equatorial line $H$, producing a SE step when visiting a black vertex, and
producing a step $(-p,q)$ when visiting a white vertex having $p$ incoming red edges and $q$ incoming blue edges (the produced walk starting at $(0,a)$).  
Let $\tilde{e}$ be the bottom edge on the right outer boundary of $B$, and let $L=(e_1,\ldots,e_n)$ be the ordered list of edges of $B\backslash \tilde{e}$ used in the KMSW bijection.
For every edge $e$ of $B\backslash \tilde{e}$ that is internal (resp. external), we let $o(e)$ be the black (resp. white) inner vertex of $S$ corresponding to $v(e)$ (resp. to $f(e)$).  
Via the mapping $\iota$, the KMSW bijection amounts to produce (with starting point $(0,a)$) the step sequence $s_1,\ldots,s_n$, where, for $k\in \llbracket 1,n\rrbracket$, $s_k$ is a SE step if $o(e_k)$
is black, and is a step $(-p,q)$ if $o(e_k)$ is a white vertex having $p$ incoming red edges and $q$ incoming blue edges. 
Thus, we just have to check that $o(e_1),\ldots,o(e_n)$
gives the list of inner vertices of $S$ ordered along $H$.  
This property (which can be visualised in Figure~\ref{fig:link}(b)) amounts to check that, for two consecutive edges $e',e''$ along $L$, the vertices $o(e')$ and $o(e'')$ are
adjacent on $H$. This can be checked by a case-by-case analysis. Consider the case where $e'$ is external, and let $f(e')$ 
be the corresponding inner face of $B$, i.e., $e'$ is the bottom edge on the left boundary of $f(e')$. Note that, apart from $e'$,  all the edges on the left boundary of $f(e')$ are internal.    
 This easily implies that $e''$ (whether external or internal) has to be the top edge on the right boundary of $f(e')$.  An easy inspection ensures that, if $e''$ is internal (resp. external), then the white vertex $o(e')$ is adjacent to the black (resp. white) vertex $o(e'')$ on $H$. On the other hand, if $e'$ is internal, let $v(e')$ be the corresponding non-pole vertex, i.e., $v(e')$ is the origin of $e'$. Then $e''$ (whether external or internal) has to be the leftmost incoming edge at $v(e')$. Again, by inspection, if $e''$ is internal (resp. external),  
 then the black vertex $o(e')=v(e')$ is adjacent to the black (resp. white) vertex $o(e'')$ on~$H$.
\end{proof}

\medskip
\medskip 

\noindent {\bf Remark.} Another bijection from $\cB_{i,j}$ to $\cR_{i,j}$ is presented in~\cite[Sec 4.1]{AlPo}. 
It relies on the rightmost incoming tree of the (dual) bipolar orientation (i.e., the tree formed by the rightmost incoming edges of non-pole vertices),   
and it  is closely related to the bijection $\sigma^{-1}\circ\Lambda$. 
However, similarly as when taking $\Phi$ instead of $\Phi'$, one of the three walks (the upper one) differs.

\section{Bijection using Schnyder woods}\label{sec:bij_sch}
We first recall the definitions of Schnyder woods and Schnyder labelings~\cite{schnyder1990embedding}. 
For $T$ a simple triangulation, the outer 
vertices are called $u_B,u_G,u_R$ in clockwise order, with $u_B$ the one incident to the root-corner. 
A \emph{Schnyder wood} of $T$ is  an orientation and coloration
(in blue, green or red) of every inner edge  of $T$ such that all edges incident to $u_B,u_G,u_R$
are incoming of color blue (resp. green, red), and every inner vertex has outdegree $3$ and satisfies the local condition 
shown in Figure~\ref{fig:schnyder}(a).  
A Schnyder wood induces a coloring of the corners. For each corner $c$ at an inner vertex $v$, the \emph{edge opposite to $c$} is the second outgoing edge encountered after $c$
in clockwise order around $v$. Then each corner at an inner vertex inherits the color of its opposite edge, and each corner at an outer vertex $v$ receives the color of $v$. 
It can be checked that,  
around each inner face, there is one corner in each color and these occur as blue, green, red in clockwise order. 
It is known that the local conditions of Schnyder woods imply that the graph in every color is a tree spanning all the internal vertices (plus the outer vertex of the same color, which is the root-vertex of the tree).    
A Schnyder wood is called \emph{minimal} if it has no clockwise cycle. Any simple triangulation has a unique minimal Schnyder wood~\cite{Fe03}. 

\begin{figure}
\begin{center}
\includegraphics[width=13.4cm]{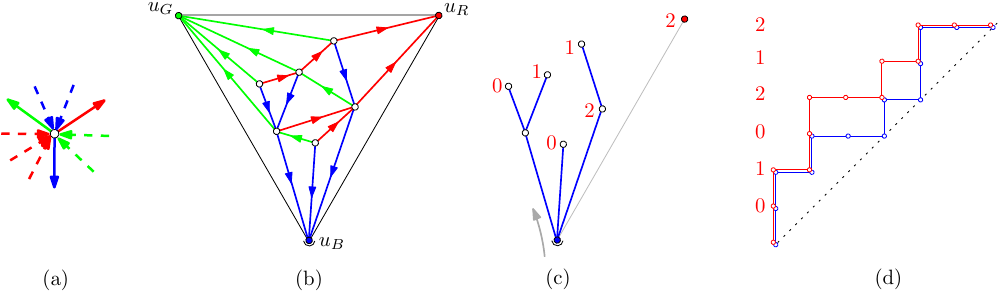}
\end{center}
\caption{(a) Local rule for inner vertices in Schnyder woods. (b) A Schnyder wood with $n+3$ vertices ($n=6$). 
(c) The blue tree with the indication of red indegrees at vertices.  
(d) The corresponding pair of Dyck walks (the red one above the blue one) in $\cP_n$.}
\label{fig:schnyder}
\end{figure}

Let $\cP_n$ be the set of pairs $(\gamma,\gamma')$ of Dyck walks of length $2n$ such that $\gamma'$ is weakly above $\gamma$. 
The Bernardi-Bonichon construction~\cite{BB09} 
starts from a simple triangulation with $n+3$ vertices endowed with a Schnyder wood, 
and outputs a pair $(\gamma,\gamma')\in\cP_n$. Precisely (see Figure~\ref{fig:schnyder} for an example), we let $\Tblue$ 
be the blue tree of the Schnyder wood plus the outer edge $e'=\{u_B,u_R\}$, and let $v_0,\ldots,v_{n}=u_R$ be the vertices
of $\Tblue\backslash\{u_B\}$ ordered according to the first visit in a clockwise walk around $\Tblue$ starting at $u_B$. 
Then $\gamma$ is obtained as the contour walk of $\Tblue\backslash e'$ and $\gamma'$ 
is $NE^{\beta_1}NE^{\beta_2}\cdots NE^{\beta_n}$, with $\beta_r$ the number of incoming red edges at $v_r$
 for $r\in\llbracket 1,n\rrbracket$. Bernardi and Bonichon 
show~\cite{BB09} that this construction gives a bijection between Schnyder woods on simple triangulations with $n+3$ vertices and 
$\cP_n$; and they show that it specializes into a bijection between minimal Schnyder woods with $n+3$ vertices 
 and $\cI_n\subset\cP_n$. Theorem~\ref{theo:1} can be seen as an extension of this statement to  separating decompositions, using
the fact~\cite[Section 5]{FPS09} that Schnyder woods correspond bijectively to separating decompositions where $s'$ has blue indegree $0$, 
and all inner white vertices have blue indegree $1$, and the bijection preserves the property of having no clockwise cycle.   

On the other hand, minimal Schnyder woods are themselves known to be in bijection to certain 
tree structures~\cite{BF12,PoSc06}. We will use here the bijection from~\cite{BF12}.  
A \emph{3-mobile} is a (non-rooted) plane tree $T$ where vertices have degree in $\{1,3\}$,  
respectively called \emph{leaves} and \emph{nodes}, and with an additional color structure given by the following conditions:
\begin{itemize}
\item 
The nodes are colored black or white, so that adjacent nodes have different colors, and there is at least one white node. 
\item
All leaves are adjacent to black nodes.
\item  
The edges are colored blue, green or red, such that around each node, the incident edges in clockwise order are blue, green and red.
\end{itemize}

In a 3-mobile, an edge is called a \emph{leg} if it is incident to a leaf and is called a \emph{plain edge}  otherwise. 
For $n\geq 1$, let $\cT_n$ be the set of 3-mobiles with $n$ white nodes. 
From a simple triangulation $M$ on $n+3$ vertices, endowed with its minimal Schnyder wood,    
one builds a 3-mobile $T\in\cT_n$ as follows (see Figure~\ref{fig:mobile}): 
\begin{enumerate}
\item
Orient the outer cycle clockwise.
\item
Insert a black vertex $b_f$ in each inner face $f$ of $M$.
\item 
For each edge $e=u\to v$, with $(f, f')$ the faces
on the left and on the right of $e$, create a plain edge $\{u,b_f\}$ (if $f$ is an inner face), and create  a leg at $b_{f'}$ pointing (but not reaching) to $v$.
Give to the plain edge (resp. to the leg) the color of the corresponding corner of $M$. 
\item
Erase the outer vertices and the edges of $M$. 
\end{enumerate}

Composing both constructions, we get a bijection between $\cI_n$ and $\cT_n$.  
Let $(\gamma,\gamma')\in\cI_n$, with $n\geq 1$. For $\binom{d}{c}\in\{\binom{E}{E},\binom{N}{N},\binom{E}{N}\}$, we say that a position 
$r\in\llbracket 0,n\rrbracket$ is of type $\binom{d}{c}$ if there is $c$ (resp. $d$) at position $r$ in $\can(\gamma)$ (resp. in $\can(\gamma')$). 
On the other hand, let $S$ be the minimal Schnyder wood with $n+3$ vertices associated to $(\gamma,\gamma')$ via the Bernardi-Bonichon construction, and let $T$ 
be the 3-mobile associated to $S$. As described above,  
we let $v_0,\ldots,v_{n}=u_R$ be the vertices
of $\Tblue\backslash\{u_B\}$ ordered according to the first visit in a clockwise walk around $\Tblue$ starting at $u_B$. 
For $r\in\llbracket 0,n\rrbracket$, let $e_r$ be the outgoing blue edge of $v_r$ (with the convention that $e_n$ is the outer edge $\{u_R,u_B\}$).  
Let $f_r$ be the face on the right of $e_r$, and let $b_r$ be the corresponding black node in  $T$. 
This gives a 1-to-1 correspondence between $\llbracket 0,n\rrbracket$ and black nodes of $T$ whose blue edge is a leg.  
Such a node is said to be of \emph{type $\binom{E}{E}$} if its green edge is a leg, of type \emph{$\binom{N}{N}$} if its red edge is a leg, 
and of \emph{type $\binom{E}{N}$} otherwise (only the blue edge is a leg), see the last column in Figure~\ref{fig:fin}.  
We claim that the type is preserved under this correspondence, i.e., a 
position $r\in\llbracket 0,n\rrbracket$ is of type $\binom{E}{E}$ (resp. $\binom{N}{N},\binom{E}{N}$) iff~$b_r$ is of the same type.  
Indeed, by the Bernardi-Bonichon construction, $\can(\gamma')$ has $E$ at position $r$ if and only if there is no incoming red edge at $v_r$, 
and $\can(\gamma)$ has $N$ at position $r$ if and only if $e$ comes just after a `valley' in a clockwise walk around $\Tblue$. 
From the local conditions of Schnyder woods, it is easy to see
that each of the 3 possible types for $r$ correspond to $f=f_r$ being in each of the configurations shown in the left column
of Figure~\ref{fig:fin}. By the local rules of the mobile constructions, these configurations correspond to $b=b_r$ being respectively of type 
 $\binom{E}{E}$, $\binom{N}{N}$, and $\binom{E}{N}$. To summarize, we obtain the following bijective result:

\begin{figure}
\begin{center}
\includegraphics[width=12.5cm]{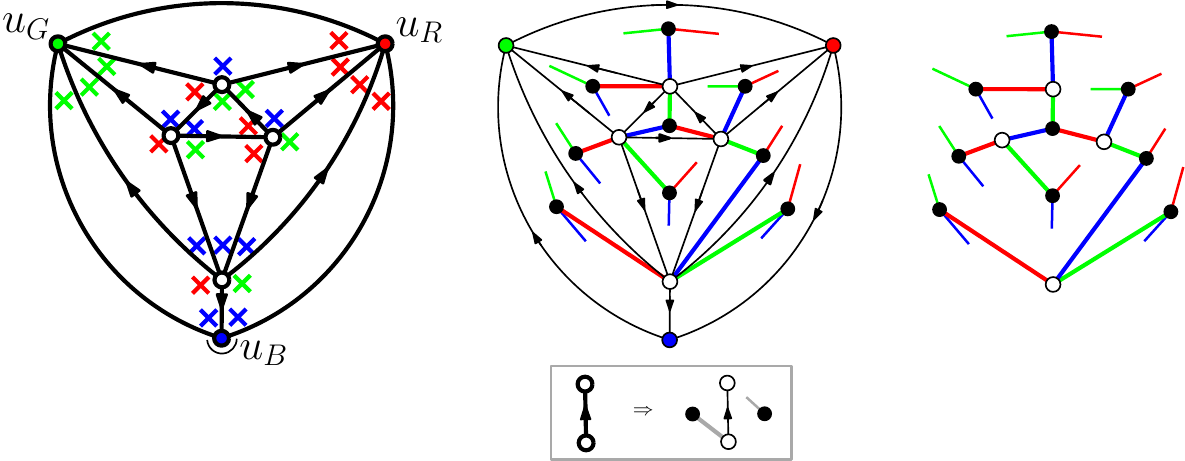}
\end{center}
\caption{Left: A simple triangulation endowed with its minimal Schnyder wood (colors are indicated at corners). Right:
the corresponding 3-mobile.}
\label{fig:mobile}
\end{figure}

\begin{figure}
\begin{center}
\includegraphics[width=10.8cm]{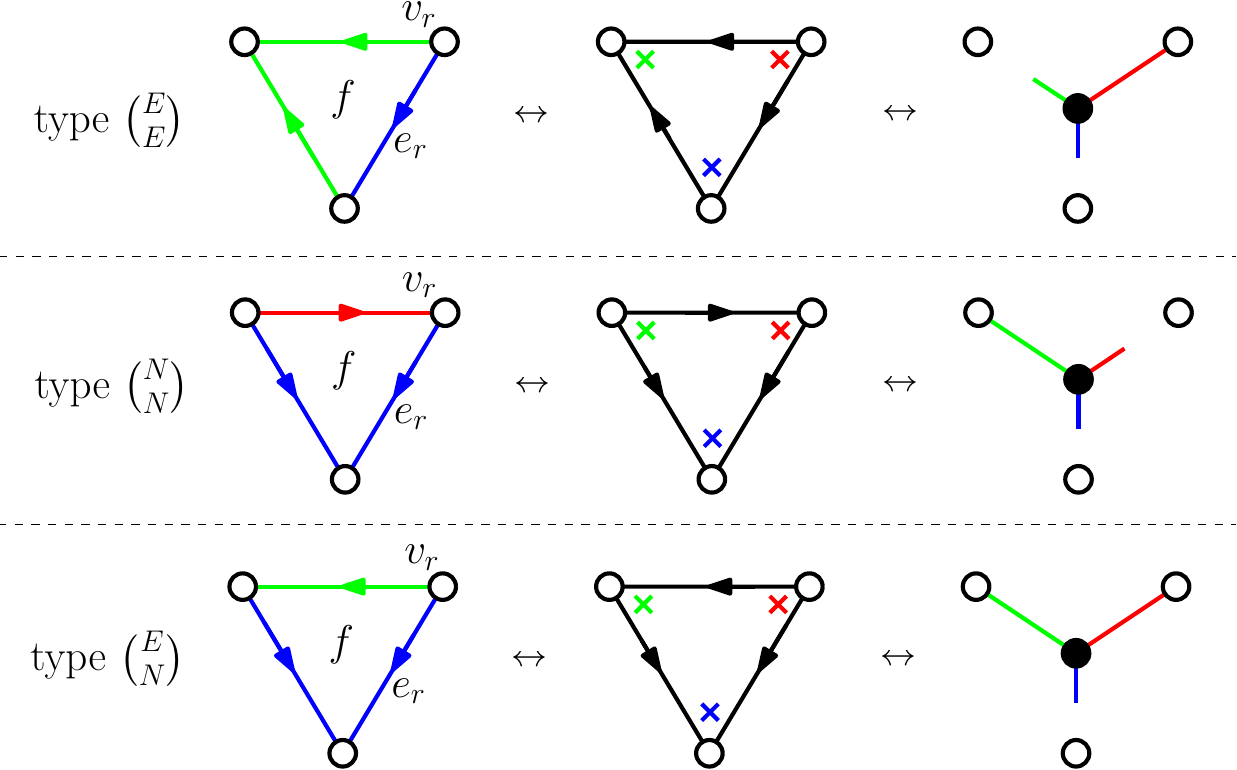}
\end{center}
\caption{Left column: configuration of $f$ for each of the possible types for position~$r$. Right column: configuration at the 
corresponding black vertex in the 3-mobile.}
\label{fig:fin}
\end{figure} 

\begin{theo}\label{theo:composed-bij}
Let $n\geq 1$. The composition of the Bernardi-Bonichon construction and of the 3-mobile construction gives a bijection between $\cI_n$ and $\cT_n$ such that each position $r\in\llbracket 0,n\rrbracket$ corresponds to a black node $b_r$ (whose blue edge is a leg) of the same type. 
\end{theo}

Let $i,j,k\geq 0$, and $n=i+j+k+1$. We denote by $a[i,j,k]$ the number of intervals in $\cI_n$ having $i+1$ positions of type $\binom{E}{E}$, 
$j+1$ positions of type $\binom{N}{N}$ and $k$ positions of type $\binom{E}{N}$, and we let $F(x,y,z):=\sum_{i,j,k}a[i,j,k]x^{i+1}y^{j+1}z^k$
be the associated generating function. Note that $F(x,y,0)=\sum_{i,j}|\cS_{i,j}|x^{i+1}y^{j+1}=\sum_{i,j}|\cG_{i,j}|x^{i+1}y^{j+1}$.  

\begin{corollary}\label{coro:tri_series}
The generating function $F\equiv F(x,y,z)$ is given by

\vspace{-.3cm}

\[F=xR+yG+zRG-\frac{RG}{(1+R)(1+G)},\] 


where $R,G$ are the trivariate series (in $x,y,z$) specified by the system 
\[
\left\{
\begin{array}{rl}
R&=(y+zR)(1+R)(1+G)^2,\\
G&=(x+zG)(1+G)(1+R)^2.
\end{array}
\right.
\]
\end{corollary}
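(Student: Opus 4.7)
The plan is to use Theorem~\ref{theo:composed-bij} to re-express $F(x,y,z)$ as a generating function over 3-mobiles $T\in\bigcup_{n\geq 1}\cT_n$, each weighted by $x$, $y$, $z$ over its black nodes according to the three types $\binom{E}{E}$, $\binom{N}{N}$, $\binom{E}{N}$, and then to compute that series via a planted-subtree decomposition. A key preliminary observation is a pointed-counting identity. In any $T\in\cT_n$ there are $2n+1$ black nodes and $n$ white nodes (by Euler's relation applied to $T$), each carrying one half-edge of every color; so the number of legs of each color is $(2n+1)-n=n+1$. Hence the number of black nodes of $T$ with a blue leg exceeds the number of white nodes by exactly $1$. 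Therefore, denoting by $\cL$ (resp.\ $\cW$) the generating series of 3-mobiles with a marked blue-leg black node (resp.\ marked white node), we have the identity $F=\cL-\cW$.

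For each color $c\in\{B,G,R\}$ define the generating function $\pi_c$ of subtrees "above a plain edge of color $c$", rooted at the black endpoint of a severed such edge and counting the type-weight of that black node together with everything attached to its two remaining half-edges; define $\tilde\pi_c$ analogously with the white endpoint as root. Since the severed white endpoint has two remaining plain half-edges (in the two other colors), one has $\tilde\pi_B=\pi_G\pi_R$, $\tilde\pi_G=\pi_R\pi_B$, $\tilde\pi_R=\pi_B\pi_G$. A four-case analysis at the root black node of each $\pi_c$—according as each of its two non-root half-edges is a leg or a plain edge, with the weights $x$, $y$, $z$ assigned only when blue is among the legs—yields
\[
\pi_B=(1+\tilde\pi_G)(1+\tilde\pi_R),\quad
\pi_G=\tilde\pi_B(1+\tilde\pi_R)+(y+z\tilde\pi_R),\quad
\pi_R=\tilde\pi_B(1+\tilde\pi_G)+(x+z\tilde\pi_G).
\]
Setting $R:=\tilde\pi_R$, $G:=\tilde\pi_G$, substituting $\tilde\pi_B=\pi_G\pi_R$ into the equations for $\pi_G,\pi_R$ and using $\pi_B=(1+G)(1+R)$ to simplify, one obtains $\pi_G=(y+zR)(1+G)$ and $\pi_R=(x+zG)(1+R)$. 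The announced system for $R,G$ then follows immediately from $R=\pi_B\pi_G$ and $G=\pi_R\pi_B$.

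Finally, decomposing at the marked black node gives $\cL=xR+yG+zRG$ (three cases according to type, with $\tilde\pi$-subtrees attached to the non-leg half-edges of the root), and decomposing at the marked white node gives $\cW=\pi_B\pi_G\pi_R$; substituting the expressions found above, together with the relation $(y+zR)(1+G)^2=R/(1+R)$ that is equivalent to the system, collapses this product to $RG/((1+R)(1+G))$. Subtracting, $F=\cL-\cW$ yields the claimed formula. The main obstacle is the case analysis producing the $\pi_c$ system: one has to be careful about the cyclic ordering of colors at each node and about the convention—specific to this context, coming from the three type-definitions preceding Figure~\ref{fig:fin}—that type-weights are only assigned to black nodes whose blue half-edge is a leg. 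Once this step is done correctly, the remaining algebraic simplifications (and the clean collapse of $\cW$) are essentially mechanical.
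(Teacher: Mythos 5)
Your proof is correct and follows essentially the same route as the paper's: your pointing identity $F=\mathcal{L}-\mathcal{W}$ is the paper's $F=F_1-F_2$ (marking a white node is equivalent to marking its unique blue plain edge, and your direct degree count replaces the paper's Schnyder-wood argument for the off-by-one), your white-rooted series $\tilde\pi_B,\tilde\pi_R,\tilde\pi_G$ are exactly the paper's planted-mobile series $B,R,G$, and your black-rooted series $\pi_G,\pi_R,\pi_B$ are the factors $B(1+R)+y+zR$, $B(1+G)+x+zG$, $(1+R)(1+G)$ that the paper inlines in its two-level decomposition. The concluding algebraic elimination is equivalent to the paper's.
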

Before proving the corollary, we note that   
$F(x,y,0)$ coincides (upon setting $G=u/(1-u)$ and $R=v/(1-v)$) with the known expression~\cite[Sec.2]{BT64}
 of the bivariate series $\sum_{i,j}|\cQ_{i,j}|x^{i+1}y^{j+1}$, and we recover 
$|\cG_{i,j}|=|\cQ_{i,j}|$ (we will also give a bijective argument at the end of the section); 
and $t+F(t,t,t)$ coincides (upon setting $G=R=\theta/(1-\theta)$) with the known expression~\cite[Eq.4.8-4.9]{Tu62} of the series counting  simple triangulations by the number of vertices minus~2. 
\begin{proof}
A \emph{planted 3-mobile} $T$ is defined similarly as a 3-mobile
 except that (exactly) one of the leaves is adjacent to a white node. 
This leaf is called the \emph{root} of $T$, and its incident edge is called the \emph{root-edge}. A planted 3-mobile
is called \emph{blue-rooted} (resp. \emph{red-rooted}, \emph{green-rooted}) if its root-edge is blue (resp. red, green). 
 We keep the same definition of 
black nodes of types $\binom{E}{E}$, $\binom{N}{N}$, $\binom{E}{N}$ as for 3-mobiles.  We let $B,R,G$ be the trivariate (variables $x,y,z$)  generating functions of blue-rooted, red-rooted, and green-rooted planted 3-mobiles, where $x$ (resp. $y$, $z$) is conjugate 
to the number of black nodes of type $\binom{E}{E}$ (resp. type $\binom{N}{N}$, type $\binom{E}{N}$). A 2-levels decomposition at the root 
translates into the following equation-system:
\[
\left\{
\begin{array}{rl}
B&=\big(B(1+G)+x+zG\big)\big(B(1+R)+y+zR\big),\\
R&=\big(B(1+R)+y+zR\big)(1+R)(1+G),\\
G&=\big(B(1+G)+x+zG\big)(1+R)(1+G).
\end{array}
\right.
\]
In a 3-mobile $T$, the number of blue legs is one more
than the number of blue plain edges. Indeed, letting $S$ be the associated minimal Schnyder wood,  there is a blue plain edge in~$T$ associated to every red edge (outgoing part) of $S$, and there is a blue leg associated to every blue edge (incoming part) of $S$, plus an extra blue leg associated to the outer edge $(u_B,u_R)$. 
Hence, by Theorem~\ref{theo:composed-bij}, 
$F=F_1-F_2$ where $F_1$ is the trivariate series of 3-mobiles with a marked blue leg, 
and $F_2$ is the trivariate series of 3-mobiles with a marked blue plain edge. A decomposition
at the marked blue leg gives $F_1=xR+yG+zRG$, and a decomposition at the marked blue plain edge gives $F_2=B(1+R)(1+G)$. We now simplify the equation-system 
by eliminating $B$. We look at the quantity $\frac{B(1+R)}{G}$, where we substitute $B$ and $G$ by their 
respective expressions in the equation-system. After simplification, this gives $\frac{B(1+R)}{G}=\frac{B(1+R)+y+zR}{1+G}$, 
so that $B(1+R)=yG+zRG$. Similarly, looking at the quantity $\frac{B(1+G)}{R}$, we obtain $B(1+G)=xR+zRG$. Substituting each occurence of $B(1+R)$ (resp. $B(1+G)$)
by $yG+zRG$ (resp. by $xR+zRG$) into the three-line system above, we obtain 
\[
\left\{
\begin{array}{rl}
B&=(1+R)(x+zG)(1+G)(y+zR),\\
R&=(y+zR)(1+R)(1+G)^2,\\
G&=(x+zG)(1+G)(1+R)^2.
\end{array}
\right.
\]
Then, substituting $B$ by its expression (given in the first line) into $F_2=B(1+R)(1+G)$, we obtain $F_2=(x+zG)(y+zR)(1+R)^2(1+G)^2=\frac{RG}{(1+R)(1+G)}$.
\end{proof}

\medskip
\medskip

\noindent{\bf A bijection between $\cG_{i,j}$ and $\cQ_{i,j}$ via mobiles.} A 3-mobile (with at least
one white node) is called \emph{synchronized} if it has no black node of type $\binom{E}{N}$, i.e., every black node  $b$ 
having a blue leg is incident to (exactly) one other leg. If this other leg is
 red (resp. green) then $b$ has type $\binom{N}{N}$
(resp. type $\binom{E}{E}$). We let $\cT_{i,j}^{\mathrm{syn}}$ be the set of synchronized 3-mobiles
with $i+1$ black nodes of type $\binom{E}{E}$ and $j+1$ black nodes of type $\binom{N}{N}$. It follows
from Theorem~\ref{theo:composed-bij} 
 that $\cT_{i,j}^{\mathrm{syn}}$ is in bijection with  $\cS_{i,j}$ (synchronized intervals such that the common canopy-word is in $\frak{S}(E^{i+1}N^{j+1})$), 
 itself in bijection with $\cG_{i,j}$. 

\begin{figure}
\begin{center}
\includegraphics[width=13.4cm]{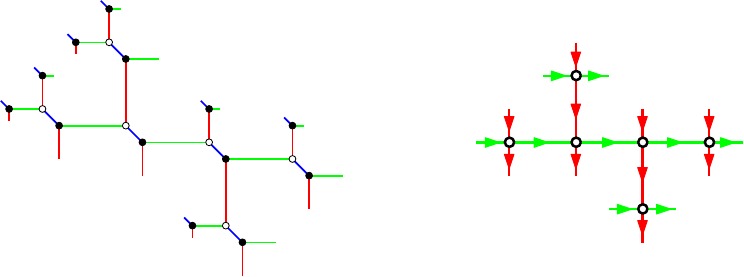}
\end{center}
\caption{Left: a synchronized 3-mobile in $\cT_{4,3}^{\mathrm{syn}}$. Right: the corresponding bicolored unrooted ternary 
tree in $\cU_{4,3}$.}
\label{fig:mobile_ternary}
\end{figure} 

An \emph{unrooted ternary tree} is a plane tree where all vertices have degree in $\{1,4\}$,
called respectively \emph{leaves} and \emph{nodes}. 
An unrooted ternary tree $T$ is said to be \emph{bicolored} if its edges are 
 colored green or red and are oriented such that, around each node, the incident edges in clockwise order are incoming red,
outgoing green, outgoing red, and incoming green. A leaf $\ell$ is called outgoing (resp. incoming) if 
its incident edge $e$ is outgoing (resp. incoming) at $\ell$, and is called red (resp. green) if $e$ is red (resp. green). 
It is easy to see that $T$ has as many red leaves that are outgoing as incoming, and as many green leaves that 
are outgoing as incoming. We let $\cU_{i,j}$ 
be the set of bicolored unrooted ternary trees having $i+1$ outgoing red leaves and $j+1$ outgoing green leaves. 
A bijection between $\cQ_{i,j}$ and $\cU_{i,j}$ has been 
introduced in Section 2.3.3 of~\cite{Sc98} and recovered in~\cite{BF12} (to obtain a ternary tree from a minimal separating decomposition, it actually uses the same local rules as those to obtain a 3-mobile from a minimal Schnyder wood, the inverse construction relies on the repeated use of so-called ``local closure" operations
 that yield the dual map of a simple quadrangulation).   

Hence, to derive another bijection between $\cG_{i,j}$ and $\cQ_{i,j}$ it remains to give a bijection
 between $\cT_{i,j}^{\mathrm{syn}}$ and $\cU_{i,j}$. The bijection, shown in Figure~\ref{fig:mobile_ternary}, is very simple. 
For $T\in\cT_{i,j}^{\mathrm{syn}}$, the corresponding $U\in\cU_{i,j}$ is obtained as follows: 
orient all the plain edges of $T$ from black to white nodes, then contract the blue plain edges, 
and finally delete the two legs at each 
black node of type $\binom{N}{N}$ or $\binom{E}{E}$ (the black nodes of type $\binom{E}{E}$ become outgoing red leaves,
those of type $\binom{N}{N}$ become outgoing green leaves).

\medskip
\medskip

\noindent{\bf Acknowledgement.} 
The authors are grateful to the two anonymous referees for very helpful comments and suggestions to improve the presentation, and 
 thank Fr\'ed\'eric Chapoton, Guillaume Chapuy, Wenjie Fang, and Mathias Lepoutre for interesting discussions.  
\'EF acknowledges the support of ANR-16-CE40-0009-01 ``GATO'', and AH the support of ERC-2016-STG 716083 ``CombiTop''.


\bibliographystyle{plain} 
\bibliography{biblio}

\end{document}